\tikzset{node distance=3cm, auto}
\newtheorem{thm}{Theorem}[subsection]
\newtheorem{axiom}[thm]{Axiom}
\newtheorem{problem}[thm]{Problem}
\newtheorem{disclaimer}[thm]{Disclaimer}
\newtheorem{rmk}[thm]{Remark}
\newtheorem{example}[thm]{Example}
\theoremstyle{remark}
\numberwithin{equation}{section} 
\numberwithin{table}{section}
\numberwithin{figure}{section}
\numberwithin{thm}{section}
\newcommand{\NI}{{\noindent}}
\newcommand{\op}[1]{{\operatorname{#1}}}
\newcommand{\C}{\mathbb{C}}
\newcommand{\R}{\mathbb{R}}
\newcommand{\bdy}{\partial}
\newcommand{\ovl}{\overline}
\newcommand{\calM}{\mathcal{M}}
\newcommand{\al}{\alpha}
\newcommand{\be}{\beta}
\newcommand{\wh}{\widehat}
\newcommand{\Li}{\mathcal{L}_\infty}
\newcommand{\Si}{\Sigma}
\newcommand{\ra}{\rightarrow}
\newcommand{\la}{\lambda}
\newcommand{\Ga}{\Gamma}
\newcommand{\ga}{\gamma}
\definecolor{darkmagenta}{rgb}{0.55, 0.0, 0.55}
\newcommand{\hl}[1] {{\em #1}}
\newcommand{\uvl}{\underline}
\newcommand{\sss}{\vspace{2.5 mm}}
\newcommand{\ind}{\op{ind}}
\newcommand{\cz}{\op{CZ}}
\newcommand{\sht}{\mathfrak{s}}
\newcommand{\lng}{\mathfrak{l}}
\newcommand{\Z}{\mathbb{Z}}
\newcommand{\Q}{\mathbb{Q}}
\newcommand{\vir}{\op{vir}}
\newcommand{\nil}{\varnothing}
\newcommand{\cyl}{\op{cyl}}
\newcommand{\cha}{\op{CHA}}
\newcommand{\lan}{\langle}
\newcommand{\ran}{\rangle}
\newcommand{\ka}{\kappa}
\newcommand{\lin}{\op{lin}}
\newcommand{\aug}{\epsilon}
\newcommand{\comb}{\op{comb}}
\newcommand{\calP}{\mathcal{P}}
\newcommand{\calA}{\mathcal{A}}
\newcommand{\sym}{\op{Sym}}
\newcommand{\frakB}{\mathfrak{B}}
\renewcommand{\ll}{\llbracket}
\newcommand{\rr}{\rrbracket}
\newcommand{\hh}{\mathbb{h}}
\newcommand{\HH}{\mathbb{H}}
\newcommand{\rsft}{\op{RSFT}}
\newcommand{\ff}{\mathbb{f}}
\newcommand{\frakW}{\mathfrak{W}}
\newcommand{\sft}{\op{SFT}}
\newcommand{\qonly}{\op{q-only}}
\newcommand{\chlin}{{\op{CH}_{\op{lin}}}}
\newcommand{\chalin}{{\cha_\lin}}
\newcommand{\rsftlin}{{\rsft_\lin}}
\newcommand{\sftlin}{{\sft_\lin}}
\newcommand{\CP}{{\mathbb{CP}}}
\def\eps{{\varepsilon}}
\newcommand{\sh}{\op{SH}}
\renewcommand{\sc}{\op{SC}}
\newcommand{\wc}{\widecheck}
\newcommand{\wt}{\widetilde}
\newcommand{\morse}{\op{Morse}}
\newcommand{\FF}{\mathbb{F}}
\newcommand{\om}{\omega}
\newcommand{\mm}{\mathfrak{m}}
\newcommand{\ev}{\op{ev}}
\renewcommand{\lll}{\Langle}
\newcommand{\rrr}{\Rangle}
\newcommand{\T}{\mathcal{T}}
\renewcommand{\setminus}{\smallsetminus}
\newcommand{\calL}{\mathcal{L}}
\newcommand{\La}{\Lambda}
\newcommand{\TT}{\mathbb{T}}
\newcommand{\calS}{\mathcal{S}}
\title{Symplectic field theory: an overview}
\author{Richard Hind\thanks{RH thanks the Simons Foundation for their support under grant no. 633715.}\hspace{0.5em}  and Kyler Siegel\thanks{K.S. is partially supported by NSF grant DMS-2105578.}}
\date{\today}
\begin{document}

\maketitle

\begin{abstract}

We summarize some of the main ideas and results around symplectic field theory, from its early inception up to recent and ongoing developments.
\end{abstract}

\section*{Acknowledgements} This article is dedicated to Yasha Eliashberg with great admiration. 

\setcounter{tocdepth}{1}
\tableofcontents

\section{SFT at a glance}\label{sec:glance}

Symplectic field theory is a highly ambitious project which first appeared in crystallized form around 2000 in the work of Eliashberg--Givental--Hofer \cite{EGH2000} (see also Eliashberg's 2006 ICM address \cite{eliashberg_sft_and_applications}).
At its core, it is a machine which associates algebraic invariants to contact manifolds and symplectic cobordisms between them. These invariants are defined by packaging together counts of punctured pseudoholomorphic curves in symplectic manifolds with infinite ends, with each end typically modeled on the positive or negative half of the symplectization of a contact manifold, and with our curves asymptotic at each puncture to a Reeb orbit in the corresponding contact manifold.
Thus each puncture is positively or negatively asymptotically cylindrical in the target symplectic manifold, with the positive punctures serving as inputs and the negative punctures serving as outputs.

There are various different layers of the theory, corresponding roughly to whether we restrict to genus zero Riemann surfaces or allow all genera, and how many positive and negative punctures we permit.
The algebraic structures which arise from SFT are quite intricate and naturally reflect the compactification structure of the corresponding moduli spaces of punctured curves.
A basic familiar complication is that our curve counts are typically not invariant or meaningful on the nose, but rather constitute a kind of chain complex (or higher algebraic structure) which is independent of choices up to chain homotopy, so that the associated homology groups are robust invariants.
One of the simplest layers is \hl{linearized contact homology}, which heuristically counts only cylinders (or more precisely cylinders with extra capped punctures called ``anchors''), and which already encodes very rich symplectic and contact geometric data, but also presents plenty of technical and computational challenges.
Near the other extreme lies ``full SFT'', which incorporates curves of all genus and any numbers of positive and negative punctures, and whose scope is only beginning to be understood.

Some striking early applications of SFT include distinguishing contact manifolds and Legendrian submanifolds whose classical topological invariants coincide. 
In fact, many such results require only linearized contact homology or the so-called \hl{contact homology algebra} (and their Legendrian cousins -- see \S\ref{subsec:relSFT}), which involve only curves of genus zero and one positive end, and which serve as an important precursor to full SFT (see e.g. \cite{eliashberg_contact,Chekanov_DGA,ustilovsky1999infinitely,yau2004cylindrical,van2004contact,Bourgeois_MB}).
However, the range of applicability of SFT extends much further, to things like existence of Reeb orbits, ruling out symplectic fillings and symplectic cobordisms, quantitative symplectic and contact nonsqueezing, and beyond.
Although we cannot possibly do justice to all known and expected applications of SFT, we will describe one simple appealing consequence from \cite[\S1.7]{EGH2000} in \S\ref{sec:applications}. 


The name ``symplectic field theory'' reflects the fact that, in the spirit of topological quantum field theory \cite{atiyah1988topological}, we have a functor from a geometric category (consisting of contact manifolds and symplectic cobordisms between them) to an algebraic category (in the simplest case vector spaces and linear maps between them). 
Crucially, given two symplectic cobordisms such that the positive end of the first and the negative end of the second are modeled on the same contact manifold, we can then concatenate them together to get a new symplectic cobordism  whose associated algebraic invariants are given by composing (in a suitable sense) those of the two given cobordisms.
Said differently, we can decompose a symplectic manifold along a contact hypersurface into two symplectic cobordisms by a process called \hl{neck stretching}, and this reduces the computation of algebraic invariants for the initial space into those for two potentially simpler pieces.
Although pseudoholomorphic curve invariants tend to be quite global in nature, this gives a powerful source of semi-local reduction, which applies even for closed curves in closed symplectic manifolds (indeed, Gromov--Witten theory can be thought of as a special case of SFT for symplectic cobordisms with no positive or negative ends).

For example, we can decompose the complex projective plane $\mathbb{CP}^2$ (with its Fubini--Study symplectic form) along the contact hypersurface $S^{3}$ given by the boundary of a small tubular neighborhood of the line at infinity. This results in two pieces: (a) $\mathbb{C}^2$ and (b) the total space of the line bundle $\mathcal{O}(1) \rightarrow \mathbb{CP}^1$, where the former has a positive end modeled on the standard contact $S^{3}$ and the latter has a negative end modeled on the same contact manifold. This decomposes the Gromov--Witten invariants of $\mathbb{CP}^2$ into SFT invariants of $\mathbb{C}^2$ and $\mathcal{O}(1)$. The bundle structure on the latter makes it fairly easy to enumerate its punctured curves, and with a little bit of effort we recover the celebrated Caporaso--Harris recursive formula \cite{CaH} for Severi degrees of the projective plane.\footnote{These are roughly the number of complex algebraic plane curves of a given genus which pass through an appropriate number of generic points. See also \cite[\S15.1]{IP2004symplecticsum} for a closely related picture using the language of relative Gromov--Witten theory.}

The rest of this note is structured as follows. We begin in \S\ref{sec:historical} with some recollections (based on conversations with Yasha Eliashberg) around the historical development of symplectic field theory. In \S\ref{sec:compactness}, we recall the SFT compactness theorem, which is a key ingredient to getting the theory off the ground. 
In \S\ref{sec:transversality} we briefly address the technical issue of transversality.
We then introduce the algebraic formalism of SFT in \S\ref{sec:formalism}, and discuss applications in \S\ref{sec:applications}.
Finally, in \S\ref{sec:further} we mention various extensions of the theory, some of which have already appeared in the literature, and others of which are more speculative. 

Let us emphasize that this note is only a biased impressionistic sketch of symplectic field theory, and barely scratches the surface of the literature. In particular, we neglect to mention many important results on foundations, computations, and applications (some of which appear elsewhere in this volume), and our attributions are no by means exhaustive.
For more a comprehensive introduction to the theory, we refer the reader to the original papers \cite{EGH2000,eliashberg_sft_and_applications} and the references therein, as well as Wendl's excellent notes \cite{wendl_SFT_notes}.


\section{Historical recollections}\label{sec:historical}

We begin by setting the scene for the discovery and early development of symplectic field theory.
The section is essentially a summary of a conversation which took place between Yasha Eliashberg and the two authors at the Institut Mittag-Leffler during summer 2024 (any inaccuracies are surely due to the present authors).

In the early 1980s, Eliashberg was already talking informally with Viatcheslav Kharlamov about the possibility of applying holomorphic methods to four-manifold topology. Errett Bishop in 1965 \cite{bishop_65} had shown that a neighborhood of an elliptic complex tangency point in a (real) two-dimensional surface $S \subset \C^2$ can be foliated by boundaries of holomorphic disks. If such local families of disks could somehow be extended to form three-dimensional Levi flat hypersurfaces, there would clearly be strong implications for the isotopy classes of surfaces. The breakthrough came in a 1983 paper of Eric Bedford and Bernard Gaveau \cite{bedford1983envelopes}, with their main theorem showing that in certain circumstances a two-sphere $S \subset \C^2$ does indeed bound a Levi flat ball. Let $(z,w)$ be coordinates on $\C^2$. The paper \cite{bedford1983envelopes} assumes that $S$ is a graph over a two-sphere $\overline{S} \subset \{ \mathrm{Im}(w)=0 \}$, that $\{ (z,w) \;|\; (z, \mathrm{Re}(w)) \in \overline{S} \}$ is strictly pseudoconvex, and that $S$ has exactly two complex tangency points. Then the Bishop families extend to form a Levi flat ball $B$ with $\partial B = S$. Eliashberg realized the graphical hypothesis could be removed, and the technique extended to show that two-spheres in smooth boundaries of strictly pseudoconvex domains in Stein manifolds bound balls which are foliated by holomorphic disks.

Around the same time, Daniel Bennequin \cite{bennequin_pfaff} proved Thurston's conjecture on transverse knots in the standard contact $\R^3$, and as a consequence established the existence of nonstandard contact structures on the three-sphere. Bennequin's proof involves intricate knot theory; as an early indication of the power of holomorphic methods, Eliashberg showed that the result follows readily from the existence of fillings by holomorphic disks.

Eliashberg wrote to Gromov about these results. This was before the appearance of his pseudoholomorphic curve theory, but Gromov replied that he was also thinking about these topics, and that likely the general context should be contact manifolds bounding symplectic manifolds.

Eliashberg worked as a computer programmer in Leningrad from 1980 until emigrating to the US in 1988. In this period he had little time for mathematics, but was excited to return to work on symplectic topology, and in particular holomorphic disks, initially at MSRI and then after settling at Stanford. Pseudoholomorphic curves had now been introduced to symplectic topology, so theorems could apply in contact and symplectic settings.

One result was a proof of Cerf's theorem that diffeomorphisms of the three-sphere extend to the four-ball. The standard contact structure on $S^3$ arises naturally as the complex tangencies in the boundary of the four-ball $B^4 \subset \C^2$. Hence  two-spheres in $S^3$ can be filled by holomorphic disks mapping to $B^4$. In fact, using coordinates $(z,w)$ as above, each of the two-spheres $S_c := \{ \mathrm{Im}(w) = c \} \subset S^3$, for $c \in (-1,1)$, has two elliptic points and bounds the three-ball $\{ \mathrm{Im}(w) = c \} \subset B^4$, which is foliated by the holomorphic disks $\{ \mathrm{Re}(w) = d, \, \mathrm{Im}(w) = c \} \subset B^4$ for $d \in (-\sqrt{c}, \sqrt{c})$. Now, by Eliashberg's classification of contact structures on $S^3$, a diffeomorphism $\phi$ of $S^3$ is isotopic to a contactomorphism $\psi$ of the standard contact structure. A contactomorphism maps the spheres $S_c$ to two-spheres which also have two elliptic points, and hence the $\psi(S_c)$ can also be filled by holomorphic disks. The proof proceeds to extend $\psi$ over $B^4$ by extending $\psi |_{S_c}$ over these filling disks.

Moving to more general cases, a key requirement for arguments of this kind is that our contact manifold be fillable, that is, it appears as the boundary of a compact symplectic manifold with a suitable compatibility between the contact and symplectic structures. In general, all we can say is that a contact manifold sits as a contact type hypersurface in its (non-compact) symplectization. In the early 1990s, Eliashberg worked with Helmut Hofer, attempting to
apply holomorphic disk techniques in contact geometry.
The breakthrough was Hofer's proof of the Weinstein conjecture for $S^3$, and also for overtwisted contact three-manifolds \cite{Hofer_pseudoholomorphic_curves_in_symplectizations}. Hofer's insight was that a family of holomorphic disks (say with boundary on a fixed sphere in a contact hypersurface in its symplectization) either has a convergent subsequence, or, looking at points where the gradient explodes, we can extract a sequence of holomorphic maps converging to a holomorphic plane which is asymptotic to a closed Reeb orbit. This is perhaps somehow reminiscent of Gromov's compactness theorem, where a holomorphic sphere may bubble from a sequence of closed curves. In any case, the natural relation between holomorphic curves and closed Reeb orbits was now established.

Very quickly, Eliashberg and Hofer realized there must be a rich algebraic structure for holomorphic curves in symplectic cobordisms with contact type boundaries. Now, instead of the closed curves of Gromov-Witten theory, we should study maps from Riemann surfaces with punctures, asymptotic as we approach the punctures to closed Reeb orbits on the boundary. The symplectization case includes contact homology, which appears in Eliashberg's ICM article \cite{eliashberg_contact}.

Eliashberg went on to consider the relative case, and invariants of Legendrian knots. Similar invariants for Legendrian knots in $\R^3$ were constructed at the same time by Yuri Chekanov \cite{Chekanov_DGA}. Chekanov's invariants were rigorously defined using combinatorial methods, but were inspired by the emerging holomorphic curve picture; indeed, Chekanov's differential counts immersed polygons in the Lagrangian projection of the knot, which correspond to holomorphic curves in the symplectization. The domains of our holomorphic curves are now disks with boundary punctures. The boundary projects to the Legendrian in the contact manifold, and the punctures are asymptotic to Reeb chords. These invariants can be used to distinguish Legendrian knots with the same ``classical'' invariants, namely the topological knot type, the Thurston-Bennequin invariant and the rotation number.

Eliashberg describes his meetings with Alexander Givental as very important for the development of the subject. Conversations with Hofer had already considered possible higher algebraic invariants extending contact homology. Givental recognised the Poisson algebra structure present when considering curves of genus zero, and in multiple conversations they worked out the correct formalism for much of the theory. The famous SFT paper \cite{EGH2000} soon followed, with characteristic contributions from each of the three authors.

At the time, it appeared that a compactness theorem would be the main input from geometric analysis required for a rigorous theory (transversality issues were not viewed as very serious, at least by Eliashberg). Eliashberg was working on such a compactness result with Fr\'ed\'eric Bourgeois when he learned that Hofer, Krzysztof Wysocki and Eduard Zehnder were collaborating on the same project. The seminal foundational paper on SFT compactness subsequently appeared in the joint five author paper \cite{BEHWZ}.

\section{SFT compactness theorem}\label{sec:compactness}

Let $\calM_{g,k}$ denote the moduli space of biholomorphism classes of genus $g$ Riemann surfaces with $k$ ordered marked points, and let $\ovl{\calM}_{g,k}$ denote its Deligne--Mumford compactification.
Recall that an element of $\ovl{\calM}_{g,k}$ is a nodal Riemann surface of genus $g$ with $k$ marked points which is stable in the sense that each component has negative Euler characteristic after removing all of its marked points and nodal points.

Given an almost complex manifold $(X^{2n},J)$ and homology class $A \in H_2(X)$, we can consider the moduli space $\calM_{g,k,A}^{X,J}$ of all $J$-holomorphic maps $u: \Si \ra X$ in homology class $A$, with domain Riemann surface varying over $\Si \in \calM_{g,k}$, modulo biholomorphic reparametrizations.
One of Gromov's key insights in \cite{gromov1985pseudo} is that when $X$ is compact and $J$ tames a symplectic form on $X$, the moduli space $\calM_{g,k,A}^{X,J}$ also has a natural compactification $\ovl{\calM}_{g,k,A}^{X,J}$ by what are now called \hl{stable maps}. Thus an element of $\ovl{\calM}_{g,k,A}^{X,J}$ is a $J$-holomorphic map from a nodal Riemann surface of genus $g$ with $k$ marked points into $X$ which lies in homology class $A$ and is stable in the sense that each {\em constant} component has negative Euler characteristic after removing all of its marked points and nodal points.

The SFT compactness theorem \cite{BEHWZ} extends Gromov's compactification by allowing the target space $X$ to be noncompact and the domain Riemann surface $\Si$ to have punctures.
There are several variants of the SFT compactness theorem, but in a typical setting the target space is a completed symplectic cobordism of the form 
\begin{align*}
\wh{X} = (\R_{\leq 0} \times Y_-) \cup X \cup (\R_{\geq 0} \times Y_+),
\end{align*}
where 
\begin{itemize}
	\item $X^{2n}$ is a Liouville cobordism with positive contact boundary $Y_+$ and negative contact boundary $Y_-$ (that is, $X$ carries a one-form $\la$ such that $d\la$ is symplectic and $\la$ restricts to a positive contact form $\al_+$ on $Y_+$ and a negative contact form $\al_-$ on $Y_-$)
	\item $\wh{X}$ carries the symplectic form given by $d\la$ on $X$, $d(e^r\al_+)$ on $\R_{\geq 0} \times Y_+$, and $d(e^r\al_-)$ on $\R_{\leq 0} \times Y_-$ (here $r$ is the coordinate on $\R_{\leq 0},\R_{\geq 0}$)
	\item $\wh{X}$ also carries a tame almost complex structure $J$ which is SFT admissible, meaning roughly that on the ends it is translation invariant, preserves the contact planes, and maps the cylindrical direction $\bdy_r$ to the Reeb direction. 
\end{itemize}
We also often assume that the Reeb orbits of $(Y_\pm,\al_\pm)$ are nondegenerate, which can always be achieved by a small perturbation. Recall that by definition the Reeb orbits of a contact manifold $Y$ with contact form $\al$ are the periodic trajectories of the Reeb vector field $R_\al$, which is characterized by $d\al(R_\al,-) = 0$ and $\al(R_\al) = 1$, and nondegeneracy implies in particular that there are only finitely many Reeb orbits with action (i.e. period) satisfying a given upper bound.
In \S\ref{sec:further} we will discuss various relaxations of the above assumptions.

Given tuples of Reeb orbits $\Ga_+ = (\ga_1^+,\dots,\ga_{s_+}^+)$ in $Y_+$ and $\Ga_- = (\ga_1^-,\dots,\ga_{s_-}^-)$ in $Y_-$, let $\calM^{\wh{X},J}_{g,k}(\Ga_+,\Ga_-)$ denote the moduli space of $J$-holomorphic maps $u: \Si \ra \wh{X}$, where:  
\begin{itemize}
	\item $\Si$ is a Riemann surface of genus $g$ with $k$ ordered marked points and $s_+ + s_-$ ordered punctures (we call the first $s_+$ punctures positive and the last $s_-$ negative)
	\item for $i = 1,\dots,s_+$, $u$ is positively asymptotic at the $i$th positive puncture to the Reeb orbit $\ga_i^+$ in $Y_+$, which means roughly that the $Y_+$ component of $u$ limits to a parametrization of $\ga_i$ as we approach the $i$th puncture, while the $\R_{\geq 0}$ component of $u$ tends to $+\infty$ 
	\item similarly, for $j = 1,\dots,s_-$, $u$ is negatively asymptotic at the $j$th negative puncture to the Reeb orbit $\ga_i^-$ in $Y_-$. 
\end{itemize}
Note that in particular the map $u: \Si \ra \wh{X}$ is proper. We will refer to such a curve with positive and negative punctures asymptotic to Reeb orbits as \hl{asymptotically cylindrical}.

The SFT compactness theorem states that $\calM^{\wh{X},J}_{g,k}(\Ga_+;\Ga_-)$ has a natural compactification $\ovl{\calM}^{\wh{X},J}_{g,k}(\Ga_+;\Ga_-)$ by so-called \hl{stable pseudoholomorphic buildings}. 
It first appeared in \cite{BEHWZ}, building on Hofer's pioneering work \cite{Hofer_pseudoholomorphic_curves_in_symplectizations} on punctured curves and the Weinstein conjecture (see also the alternative approach in \cite{CM_SFT_compactness} and the textbook \cite{abbas2014introduction}).
Roughly speaking, a stable pseudoholomorphic building in $\ovl{\calM}^{\wh{X},J}_{g,k}(\Ga_+;\Ga_-)$ consists of
\begin{itemize}
	\item some number (possibly zero) of levels in the symplectization $\R \times Y_-$
	\item a level in $\wh{X}$
	\item some number (possibly zero) of levels in the symplectization $\R \times Y_+$,
\end{itemize}
where
\begin{itemize}
	\item each level is comprised of a nodal asymptotically cylindrical marked curve with possibly disconnected domain
	\item the levels are ordered vertically, so that for any two adjacent levels the negative asymptotic Reeb orbits of the upper level agree with the positive asymptotic Reeb orbits of the lower level
	\item the symplectization levels are taken modulo the $\R$-action by translations in the target space
	\item the total domain after gluing paired punctures is a connected nodal surface of genus $g$ with $k$ marked points and $s_+ + s_-$ punctures
	\item the positive punctures at the topmost level are asymptotic to $\Ga_+$, and the negative punctures at the bottommost level are asymptotic to $\Ga_-$	
	\item the configuration is \hl{stable} in the sense that each nonconstant component has negative Euler characteristic after removing all marked points and nodal points, and also no symplectization level consists entirely of trivial cylinders over Reeb orbits.
\end{itemize}
See Figure~\ref{fig:SFT_compactness} for a cartoon.

\begin{figure}
  \includegraphics[scale=.7]{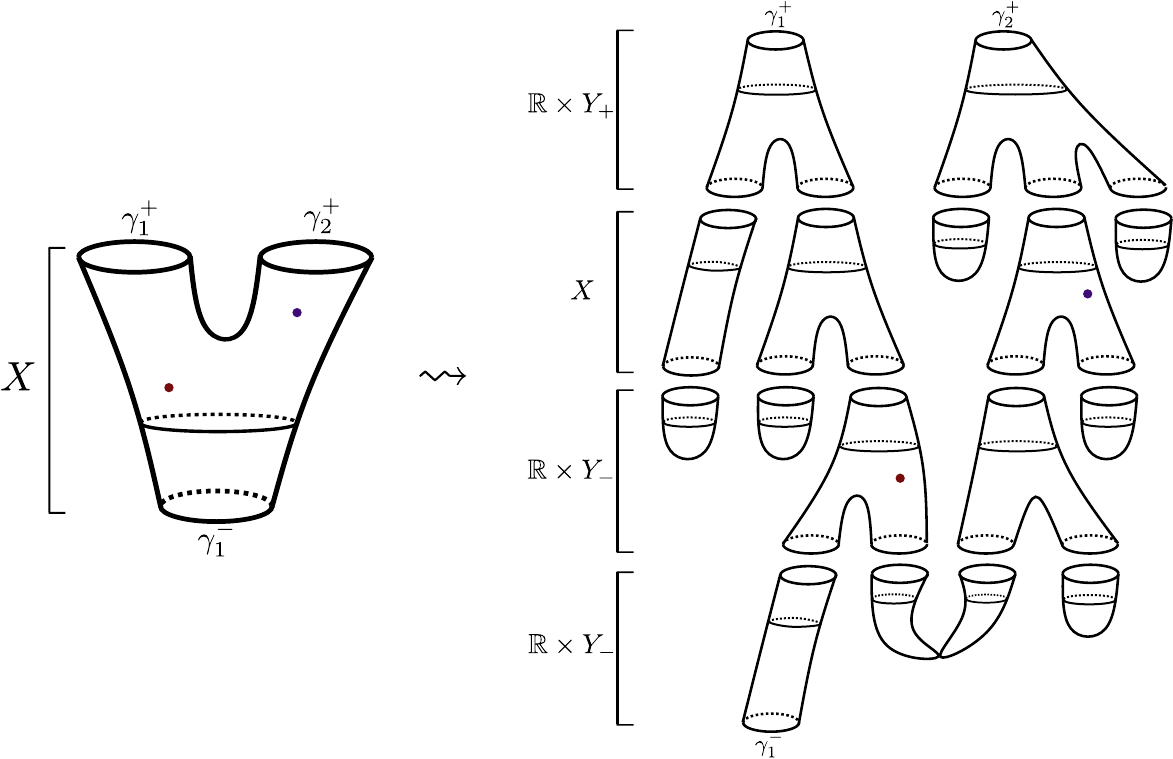}
  \caption{An asymptotically cylindrical pseudoholomorphic pair of pants and a stable pseudoholomorphic building to which it could a priori degenerate under the SFT compactness theorem.}
  \label{fig:SFT_compactness} 
\end{figure}

It is sometimes useful to slightly refine the above by taking homology classes of curves into account (this becomes essential in the non-exact case as in \S\ref{subsec:nonexact}).
Let $H_2(X,\Ga_+ \cup \Ga_-)$ denote the homology group of integral $2$-chains $Z$ in $X$ satisfying $\partial Z = \sum\limits_{i=1}^{s_+} \ga_i^+ - \sum\limits_{j=1}^{s_-} \ga_j^-$, modulo boundaries of $3$-chains (this forms a torsor over the usual integral homology group $H_2(X)$). 
By identifying $\wh{X}$ diffeomorphically with the interior of $X$, each curve in $\calM^{\wh{X},J}_{g,k}(\Ga_+;\Ga_-)$ has an associated homology class $[u] \in H_2(X,\Ga_+ \cup \Ga_-)$.
For fixed $A \in H_2(X,\Ga_+ \cup \Ga_-)$, we consider the subspace $\calM^{\wh{X},J}_{g,k,A}(\Ga_+;\Ga_-) \subset \calM^{\wh{X},J}_{g,k}(\Ga_+;\Ga_-)$ of those curves $u: \Si \ra \wh{X}$ with $[u] = A$, along with its compactification $\ovl{\calM}^{\wh{X},J}_{g,k,A}(\Ga_+;\Ga_-) \subset \ovl{\calM}^{\wh{X},J}_{g,k}(\Ga_+;\Ga_-)$ consisting of those stable pseudoholomorphic buildings such that the total glued curve lies in the homology class $A$.

\begin{rmk}
A key observation underlying Gromov's compactness theorem is that the \hl{energy} of a closed curve (essentially the $L^2$ norm of its derivative) agrees with its symplectic area, and hence is a priori bounded for curves lying in a fixed homology class $A$ (here it is crucial that the almost compact structure $J$ is tamed by the symplectic form on $X$).
For punctured curves in $\calM_{g,k}^{\wh{X},J}(\Ga_+;\Ga_-)$ as above, the energy in fact depends only on the asymptotic Reeb orbits $\Ga_+,\Ga_-$, which is why the moduli space $\ovl{\calM}^{\wh{X},J}_{g,k}(\Ga_+;\Ga_-)$ is compact without specifying any homology class. 
However, this relies on Stokes' theorem, and hence does not hold if we relax the assumption that the symplectic form $\wh{X}$ is exact (see \S\ref{subsec:nonexact}).
Incidentally, the naive notion of energy for asymptotically cylindrical curves is always infinite, but there is a natural replacement called the \hl{Hofer energy} (and similarly for the symplectic area) -- see \cite[\S5.3]{BEHWZ}.

In the exact case (i.e. for the completion of a Liouville domain or a symplectization of a contact manifold), a simple but important observation is that, given an asymptotically cylindrical curve with asymptotics $\Ga_+,\Ga_-$, the total action of $\Ga_+$ (i.e. the sum of the periods of its constituent Reeb orbits) minus the total action of $\Ga_-$ is always nonnegative. 
This follows from Stokes' theorem and the definition of SFT admissible almost complex structures.
In particular, this makes it possible to define an action filtration on SFT which is sensitive to quantitative information (see \S\ref{subsec:quant}).
\end{rmk}

In a typical usage of the SFT compactness theorem, one seeks to show that some moduli space $\calM^{\wh{X},J}_{g,k,A}(\Ga_+;\Ga_-)$ of expected dimension zero is a finite set by showing that it is compact, which follows if we can establish $\ovl{\calM}^{\wh{X},J}_{g,k,A}(\Ga_+;\Ga_-) = \calM^{\wh{X},J}_{g,k,A}(\Ga_+;\Ga_-)$, i.e. there are no nontrivial stable pseudoholomorphic buildings to which a curve in $\calM^{\wh{X},J}_{g,k,A}(\Ga_+;\Ga_-)$ could degenerate.
A priori there are many potentially elaborate buildings in $\ovl{\calM}^{\wh{X},J}_{g,k,A}(\Ga_+;\Ga_-)$ (recall Figure~\ref{fig:SFT_compactness}), but one observes that most of these have expected codimension at least one, and hence could be ruled out if we knew that every stratum appears with its expected codimension (this is the problem of transversality, which we take up in the next section).
Similarly, in the case that the moduli space $\calM^{\wh{X},J}_{g,k,A}(\Ga_+;\Ga_-)$ has expected dimension $1$, one typically seeks to show that its SFT compactification $\ovl{\calM}^{\wh{X},J}_{g,k,A}(\Ga_+;\Ga_-)$ is a one-dimensional cobordism whose boundary components corresponds to precisely two-level stable pseudoholomorphic buildings, and this would hold if we could rule out more complicated buildings.

\sss

There are several important variations on the above SFT compactness theorem which are crucial for constructing the full SFT package.
The first is where we replace $\wh{X}$ with the symplectization of a contact manifold $Y$, i.e. $\R \times Y$ equipped with the symplectic form $d(e^r\al)$, where $\al$ is a contact form on $Y$.
In this case we work with an almost complex structure which is SFT admissible for the symplectization $\R \times Y$, which in particular means globally translation invariant.
The corresponding SFT compactification then consists of stable pseudoholomorphic buildings with one or more symplectization levels $\R \times Y$, each of which is taken modulo $\R$-translations in the target space.
Note that both the uncompactified and compactified moduli spaces of curves in a symplectization inherit $\R$-actions induced by translations in the target space. 

Another variation is where we take a one-parameter family of almost complex structures $\{J_t\}_{t \in [0,1]}$ (or possibly a higher dimensional family), and we consider the parametrized moduli space of pairs $(u,t)$ such that $u$ is $J_t$-holomorphic.
Lastly, there is the degenerate case of the above which is relevant for neck-stretching, where $\{J_t\}_{t \in [0,1)}$ is a family of almost complex structures on $\wh{X}$ which approaches the neck-stretching limit as $t \ra 1$. 
This means that $X$ splits along a contact hypersurface $Y$ into two Liouville cobordisms $X_-,X_+$, and $J_t$ is cylindrical on an increasingly long collar neighborhood of $Y$.
In this case, the SFT compactification includes limiting buildings associated with $t = 1$ which consist of some number of symplectization levels $\R \times Y_-$, a cobordism level $\wh{X}_-$, some number of symplectization levels $\R \times Y$, a cobordism level $\wh{X}_+$, and some number of symplectization levels $\R \times Y_+$.

\sss

Finally, note that while the SFT compactness theorem provides a natural geometric prescription for compactifying moduli spaces of asymptotically cylindrical curves, for stronger control on the boundary structure of these compactified moduli spaces we also require counterpart gluing theorems (similar considerations hold e.g. for Morse and Floer homology). For example, in the case of a compactified one-dimensional moduli space we will need a gluing theorem stating that every two-level stable pseudoholomorphic building which a priori appears in $\ovl{\calM}^{\wh{X},J}_{g,k,A}(\Ga_+;\Ga_-)$ really is a limit of curves in the uncompactified space $\calM^{\wh{X},J}_{g,k,A}(\Ga_+;\Ga_-)$.
The proof structure of a gluing theorem for pseudoholomorphic curves is detailed in \cite[\S10]{JHOL} in the context of Gromov--Witten theory, while gluing theorems for asymptotically cylindrical curves with paired punctures are proved in \cite[\S5]{Pardcnct} in the context of the contact homology algebra (see also \cite{HuT,HTII}).
To our knowledge, the most general gluing theorem needed for symplectic field theory has not appeared in full detail in the literature, but is widely expected to proceed along similar lines to loc. cit.

\begin{rmk}
In the above discussion, we neglected to mention an extra piece of data, namely \hl{asymptotic markers}, which single out a preferred direction at each puncture of the domain Riemann surface. Although these do not affect the basic structure of the SFT compactification, they do become important when discussing gluing along multiply covered Reeb orbits, giving rise to extra combinatorial factors.
\end{rmk}

\begin{rmk}
Fish \cite{fish_target_local} has proven a ``target local'' version of Gromov's compactness which can often be applied to punctured curves in more general settings than the ones discussed above.
\end{rmk}


\section{Transversality}\label{sec:transversality}


Before discussing the algebraic formalism of SFT, we should mention the issue of transversality.
In order to read off nice algebraic relations from compactified moduli spaces of punctured curves, 
we would ideally like to know (among other things) that all relevant moduli spaces are smooth manifolds whose actual dimension agrees with the expected dimension, at least for a generic choice of almost complex structure. 
With the notation of \S\ref{sec:compactness}, the expected dimension of the uncompactified moduli space $\calM_{g,k,A}^{\wh{X},J}(\Ga_+;\Ga_-)$ is given by the Fredholm index
\begin{align}\label{eq:ind}
\ind\,\calM_{g,k,A}^{\wh{X},J}(\Ga_+;\Ga_-) = (n-3)(2-2g-s_- - s_+) + \sum_{i=1}^{s_+} \cz(\ga_i^+) - \sum_{j=1}^{s_-} \cz(\ga_j^-) + 2c_1(A) + 2k,
\end{align}
where $\dim \wh{X} = 2n$.
Here $\cz(\ga)$ denotes the Conley--Zehnder index, which measures the winding number of the contact hyperplanes around a (nondegenerate) Reeb orbit $\ga$, and $c_1(A)$ is a relative Chern number (both of these terms depend on auxiliary trivialization data for the contact hyperplanes, but the expression in \eqref{eq:ind} does not).
Typically one presents $\calM_{g,k,A}^{\wh{X},J}(\Ga_+;\Ga_-)$ as the set of zeroes of a certain Fredholm section of a Banach vector bundle over a Banach manifold (the section is essentially the Cauchy--Riemann operator), and if we can show that this section is transverse to the zero section, then it will follow by a Banach space version of the inverse function theorem that $\calM_{g,k,A}^{\wh{X},J}(\Ga_+;\Ga_-)$ is a smooth manifold of dimension equal to its Fredholm index. In this case we will say that the corresponding moduli space is {\em regular} (or ``transversely cut out'').

It turns out that transversality can indeed be arranged by a generic choice of $J$ for all {\em simple} curves, i.e. those which do not factor as $\Si \xrightarrow{f} \Si' \ra \wh{X}$ with $\Si'$ another Riemann surface and $f$ a holomorphic map of degree at least two.
Indeed, there is a by now standard method for achieving transversality for simple curves by generic perturbations of a given almost complex structure (see e.g. \cite[\S3]{JHOL}), and this applies also to moduli spaces of asymptotically cylindrical curves after some adaptations (see e.g. \cite[\S8]{wendl_SFT_notes}).
However, this result generally fails for multiply covered curves, which tend to appear unavoidably in families of greater than expected dimension, even for generic almost complex structures.

\begin{example}\label{ex:mult_cov}
Here is a simple concrete example which illustrates the failure of transversality for multiple covers.
Consider $X := E(1,c)\; \setminus\; \op{Int}\; E(1,1+\delta)$ for $c > 0$ very large and $\delta > 0$ very small, 
where $E(a,b) := \{(z_1,z_2)\;|\; \pi|z_1|^2/a + \pi|z_2|^2/b \leq 1\}$ denotes the four-dimensional symplectic ellipsoid in $\C^2$ with area factors $a,b \in \R_{>0}$.
Note that $X$ is a Liouville cobordism with positive boundary $Y_+ := \bdy E(1,c)$ and negative boundary $Y_- := \bdy E(1,1+\delta)$.
The Reeb orbits of $Y_\pm$ are $\sht_\pm := Y_\pm \cap (\C \times \{0\})$ and $\lng_\pm := Y_\pm \cap (\{0\} \times \C)$ and their multiple covers, and we can globally trivialize the contact hyperplane distribution such that, for all $k \in \Z_{\geq 1}$, the Reeb orbit in $Y_\pm$ of $k$th smallest action has Conley--Zehnder index $1+2k$.
With this trivialization, the relative first Chern number term in the
 index formula \eqref{eq:ind} vanishes, so the moduli space $\calM^{\wh{X},J}_{0,0}(\sht_+;\sht_-)$ of $J$-holomorphic cylinders  which are positively asymptotic to $\sht_+$ and negatively asymptotic to $\sht_-$ has expected dimension zero.
Moreover, it is possible to show (somewhat less trivially) that $\calM^{\wh{X},J}_{0,0}(\sht_+;\sht_-)$ is nonempty for any generic choice of SFT admissible almost complex structure $J$.

Similarly, letting $\sht_\pm^2$ denote the two-fold cover of the Reeb orbit $\sht_\pm$, the corresponding moduli space of cylinders $\calM^{\wh{X},J}_{0,0}(\sht_+^2;\sht_-^2)$ has expected dimension $-2$. 
Observe that this moduli space is necessarily nonempty for any generic SFT admissible $J$ (by taking two-fold covers of curves in $\calM^{\wh{X},J}_{0,0}(\sht_+;\sht_-)$), so evidently it cannot be a smooth manifold whose dimension matches its expected dimension.
Note that even if we are not directly interested in the moduli space $\calM^{\wh{X},J}_{0,0}(\sht_+^2;\sht_-^2)$, it may well spoil transversality for other moduli spaces we do care about by appearing in buildings in their SFT compactifications.
\end{example}


\sss

In order to overcome this difficulty, one idea is to introduce a wider class of ``abstract'' perturbations of the pseudoholomorphic curve equation which provide enough freedom to achieve transversality. 
For example, we could introduce an inhomogeneous term to the Cauchy--Riemann equation, which indeed suffices to achieve transversality locally near any given curve. 
However, it then becomes a quite subtle problem to make these perturbations in a coherent way in order to obtain globally defined moduli spaces which suitably respect the SFT compactification structure and the action by biholomorphic parametrizations.

Suppose that $X$ is a Liouville cobordism between contact manifolds $Y_+$ and $Y_-$, and let us pretend for a moment that we can find SFT admissible almost complex structures on $\wh{X}$ and $\R \times Y_\pm$ such that all relevant uncompactified  moduli spaces in $\wh{X}$ and $\R \times Y_\pm$ are regular, and moreover their compactifications have sufficiently nice boundary stratifications.
The basic structure coefficients of SFT should then come from the signed\footnote{Here the signs come from assigning coherent orientations to our moduli spaces as in \cite{bourgeois2004coherent} or \cite[\S1.8]{EGH2000} (see also \cite[\S11]{wendl_SFT_notes} and \cite{bao2023coherent}).} counts of points in moduli spaces of the form $\ovl{\calM}_{g,0,A}^{\wh{X},J}(\Ga_+;\Ga_-)$ and $\ovl{\calM}_{g,0,A}^{\R \times Y_\pm,J_\pm}(\Ga_+;\Ga_-) / \R$ for all choices of Reeb orbits $\Ga_\pm$ and homology classes $A$ such that these have expected dimension zero.
In particular, under our transversality assumption these should be finite $0$-dimensional manifolds which coincide with their uncompactified counterparts.
Moreover, the basic algebraic relations which these counts satisfy come from considering moduli spaces of the same form but of expected dimension one, for which the signed count of boundary points should vanish.

As the above transversality assumption is largely unrealistic (c.f. Example~\ref{ex:mult_cov}), here is a (somewhat vague) formulation of the problem we must solve in order to define SFT:
\begin{problem}\label{prob:SFT_trans}
Come up with a coherent framework for assigning counts $\#^\vir\ovl{\calM}_{g,0,A}^{\wh{X},J}(\Ga_+,\Ga_-) \in \Q$
and $\#^\vir\ovl{\calM}_{g,0,A}^{\R \times Y_\pm,J_\pm}(\Ga_+,\Ga_-)/\R \in \Q$ whenever these moduli spaces have expected dimension zero.
These counts should satisfy various relations which mirror the boundary strata of expected dimension zero for the analogous moduli spaces of expected dimension one.
\end{problem}
\NI Note that these counts must in general be rational numbers, because our moduli spaces are generally at best orbifolds due to the action of biholomorphic raparametrizations for multiple covers.
Also, the formulation in Problem~\ref{prob:SFT_trans} does not cover the full expected functoriality package for SFT, which should also incorporate things like the parametrized moduli spaces mentioned in \S\ref{sec:compactness}, and possibly also moduli spaces of punctured curves satisfying additional geometric constraints (c.f. \S\ref{subsec:geom_constr}), and so on.

Of course, even if we manage to satisfactorily solve Problem~\ref{prob:SFT_trans} and its extensions, one might wonder how we could ever compute anything, especially if the ``curves'' we end up counting are no longer geometrically meaningful objects.
Indeed, even without any extra perturbations, SFT moduli spaces are notoriously difficult to compute.
Here let us briefly mention a few techniques in this direction which make the  problem of computations more tractable than it might at first glance appear.
Firstly, it is sometimes the case that all relevant curves vanish a priori for degree reasons.
For instance, if the contact form $\al$ on $Y$ is such that all Reeb orbits have odd Conley--Zehnder index, then one can check using \eqref{eq:ind} that there are no moduli spaces of the form $\ovl{\calM}_{g,0,A}^{\R \times Y_\pm,J_\pm}(\Ga_+,\Ga_-)/\R$ having expected dimension zero. For example, this is what happens for the exotic Brieskorn contact structures studied in \cite{ustilovsky1999infinitely}.

Secondly, a nice perturbation framework should ideally satisfy the following axiom\footnote{This is sometimes referred to as the ``Obamacare axiom'', the slogan being ``If you like your curve you can keep it.''} 
\begin{axiom}
If an uncompactified SFT moduli space of expected dimension zero is regular and coincides with its SFT compactification, then its virtual count agrees with its classical signed count.
In particular, if the moduli space in question is empty, then this count is necessarily zero.\footnote{Here is it useful to keep in mind that an empty moduli space is vacuously regular. However, a slightly subtle point is that the SFT compactification could be nonempty even if the corresponding uncompactified moduli space is empty.}
\end{axiom}
\NI This axiom is very useful for computations, since in practice many relevant moduli spaces are either regular for a generic choice of almost complex structure (for example if we can rule out multiple covers), or else necessarily empty for elementary reasons (e.g. index considerations, sign considerations, nonnegativity of energy, homological constraints, etc).
In favorable scenarios, one may then be able to explicitly enumerate the regular moduli spaces using say a fibration structure, by reduction to algebraic geometry, using tropical curve counting, etc.

\sss

The SFT transversality problem has inspired a great deal of work in the last several decades, with a number of different projects of varying scopes and degrees of completion.
Although the inner details of these approaches lie beyond the scope of this note, let us mention just a few\footnote{Again, we emphasize that this biased list is by no means exhaustive.} important contributions:

\begin{itemize}
	\item The oldest and best known approach to SFT transversality is the polyfold project of Hofer--Wysocki--Zehnder \cite{hofer2006general,fish2018lectures} (see also the textbook \cite{polyfold_and_fredholm_theory}). In contrast to other approaches based on finite dimensional reduction, the polyfold approach is infinite-dimensional in nature and based on a new paradigm for Fredholm theory.

	\item The implicit atlas formalism of Pardon \cite{Pardon_algebraic_approach} is successfully applied in \cite{Pardcnct} to construct the contact homology algebra for a general contact manifold. This approach is based on topological rather than smooth moduli spaces, and uses a slightly smaller compactification than the usual one discussed in \S\ref{sec:compactness}. At the time of writing, it is not yet understood how to adapt this technique to the setting of linearized contact homology, due to subtleties related to homotopies induced by parametrized moduli spaces. 

	\item Hutchings--Nelson \cite{HuN,HN_hypertight} have been developing an approach to contact homology for three-dimensional contact manifolds, for which the automatic transversality results of \cite{Wendl_aut} can be applied.

	\item Bao--Honda \cite{bao2023semi} gave a construction of the contact homology algebra of a contact manifold based on a notion of semi-global Kuranishi charts.

	\item Ishikawa \cite{ishikawa2018construction} has recently announced a general construction of SFT based on the theory of Kuranishi atlases developed by Fukaya--Ono \cite{fukaya1999arnold}.

\end{itemize}


There are also a number of other approaches to transversality which have been applied in various settings in symplectic geometry and gauge theory; see e.g. \cite[Rmk. 0.2]{Pardcnct} for a comprehensive list of references.
Let us also add the Donaldson divisor approach of Cieliebak--Mohnke \cite{CM1}, which is most effective in closed symplectic manifolds but has been successfully applied in neck-stretching contexts in \cite{CM2} (see \S\ref{sec:applications}), and also the promising recent approach of global Kuranishi charts \cite{abouzaid2021complex,hirschi2022global,hirschi2022global}.

Lastly, let us point out a few more approaches to SFT transversality which are more oblique, in a sense circumventing the issue altogether.

\begin{itemize}

\item It is known that some of the linearized invariants in SFT are closely analogous or even equivalent to known invariants in Floer theory, for which Hamiltonian perturbations typically suffice to achieve transversality. 
For instance, an equivalence between linear contact homology and $S^1$-equivariant symplectic cohomology is presented in \cite{bourgeois2009exact,Bourgeois-Oancea_equivariant}, and the latter (which is rigorously defined in great generality) is used as an effective ersatz for linearized contact homology in e.g. \cite{gutt_pos_equiv,Gutt-Hu}.
Furthermore, \cite{Ekholm-Oancea_DGAS} shows that this equivalence further extends to the contact homology algebra and a CDGA structure defined using symplectic cohomology. We elaborate in the connections between SFT and Floer theory in \S\ref{subsec:ext:Floer} below.

\item Another fruitful approach is to work directly with those SFT moduli spaces which are relevant for a given application, rather than attempting to fully construct coherent algebraic structures.
The idea is that in any given situation there may be only certain moduli spaces which carry important geometric content, and achieving transversality for other moduli spaces may be unnecessary.
Versions of this perspective are applied in qualitative settings in e.g. \cite{ganatra2020embedding}
and in quantitative settings (sometimes under the name ``elementary capacities'' or ``elementary spectral invariants'') in e.g. \cite{mcduff2021symplectic,hutchings2022elementary,edtmair2022elementary,hutchings2022elementaryspectral,chaidez2023elementary}.

\item The version of contact homology implemented in \cite{eliashberg2006geometry} for subdomains of $\R^{2n} \times S^1$ avoids the issue of multiply covered cylinders by restricting to asymptotic Reeb orbits which wind only once around the $S^1$ factor (see \S\ref{subsec:ctct_domains}).

\item Embedded contact homology (see \S\ref{subsec:ECH}) is an analogue of symplectic field theory for three-dimensional contact manifolds which is defined using asymptotically cylindrical punctured curves in their symplectizations, and which rigorously achieves transversality by roughly considering only embedded pseudoholomorphic curves, viewed as currents.
\end{itemize}

\section{Algebraic formalism}\label{sec:formalism}

We are now ready to discuss the algebraic formalism of SFT. 
We first briefly sketch a simplified version of the Eliashberg--Givental--Hofer framework from \cite{EGH2000} in \S\ref{subsec:EGH}.
In \S\ref{subsec:q-only} we discuss a slightly different perspective which is for some purposes easier to conceptualize.
Finally, in \S\ref{subsec:linearize} we discuss an important process called linearization which allows us to define various simplified invariants of symplectic manifolds with contact boundary.

\begin{disclaimer}
While some limited pieces of the SFT package discussed in this section have been constructed rigorously in generality, most of it is continginent on the existence of suitable virtual counts as in \S\ref{sec:transversality}.
We will mostly focus here on the rich algebraic structure of the invariants arising from SFT, with an agnostic approach as to which transversality scheme is used. The same also holds for the various extensions outlined in \S\ref{sec:further}.
\end{disclaimer}

\subsection{Contact homology, rational symplectic field theory, and full symplectic field theory}\label{subsec:EGH}

Fix a Liouville cobordism between contact manifolds $Y_+$ and $Y_-$ which are endowed with nondegenerate contact forms $\al_+$ and $\al_-$ respectively.
Fix generic SFT admissible almost complex structures $J_{\wh{X}}$ on $\wh{X}$ and $J_\pm$ on $\R \times Y_\pm$.
Recall that we seek to define algebraic invariants of $Y_\pm$, as well as morphisms, induced by $X$, from the invariants of $Y_+$ to those of $Y_-$.

By analogy with Morse and Floer homology, a first naive attempt is to define a contact invariant called ``cylindrical contact homology'' as a chain complex $C_\cyl(Y_\pm)$ generated by the Reeb orbits of $\al_\pm$, whose differential counts index one $J_\pm$-holomorphic cylinders in the symplectization $\R \times Y$ (modulo target translations), and a chain map $C_\cyl(Y_+) \ra C_\cyl(Y_-)$ which counts index zero $J_{\wh{X}}$-holomorphic cylinders in $\wh{X}$.
Here ``count'' should be interpreted in a virtual sense as in Problem~\ref{prob:SFT_trans}, and hence these may implicitly depend on some auxiliary abstract perturbation data (as well as our choices of contact forms and almost complex structures).

There are a few issues with this approach. One relatively minor point is that in order to define suitable signed counts we need to coherently orient our moduli spaces, and for this we must restrict to Reeb orbits which are \hl{good}. Here we say that a Reeb orbit is \hl{bad} if it is an even-fold cover of another Reeb orbit of opposite parity, otherwise it is good. When more care is taken with asymptotic markers, we see that bad Reeb orbits appear with an even number of choices of asymptotic marker and these come in cancelling pairs, so that we can and should ignore them.

A bigger issue is that this purported differential does not always square to zero. Naively, the differential would square to zero if we could show that any index two cylinder in $\R \times Y_\pm$ can only break into a two-level building, with each level consisting of an index one  cylinder in $\R \times Y_\pm$. However, a priori the SFT compactness theorem allows other possible degenerations of a cylinder, for instance into a pair of pants in an upper level and a cylinder and plane in a lower level (see Figure~\ref{fig:bad_breaking}).
Note that the analogous picture involving a plane in the upper level cannot occur, because by Stokes' theorem any asymptotically cylindrical curve in a symplectization must have at least one positive end.

	\begin{figure}
	\centering
  \includegraphics[scale=.6]{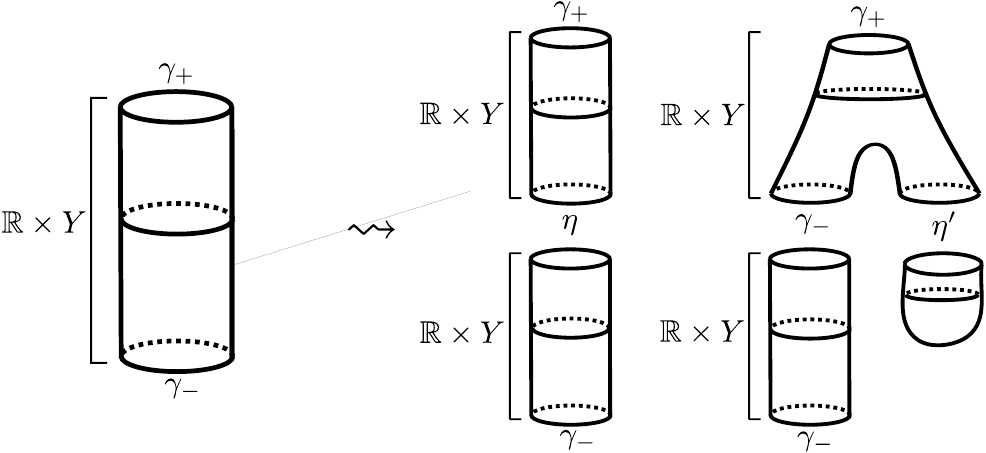}
  \caption{Two possible breakings of a cylinder in a symplectization into a two level building. The latter breaking prevents the naive differential which counts cylinders from squaring to zero, and this motivates the definition of the contact homology algebra.}
  \label{fig:bad_breaking} 	
	\end{figure}

An elegant resolution is to simply take into account curves with extra negative ends from the outset, defining an algebraic structure based on genus zero punctured curves with one positive end and any number (possibly zero) of negative ends, as in the left panel of Figure~\ref{fig:SFT_schematic}.
Given a contact manifold $Y^{2n-1}$ with nondegenerate contact form $\al$, let $\calP_Y$ denote the set of good Reeb orbits\footnote{More precisely, these are unparametrized Reeb orbits, but we remember the covering multiplicity.} in $Y$, and let $V_Y := \Q\lan q_\ga\;|\; \ga \in \calP_Y \ran$ be the graded rational vector space with a basis element $q_\ga$ for each good Reeb orbit $\ga$ of $Y$, with grading $|q_{\ga}| = \cz(\ga) + n-3$.\footnote{Here we interpret these gradings in $\Z/2$, noting that the Conley--Zehnder index has a well-defined parity independent of any choice of trivialization of the contact hyperplanes along it. Under additional topological assumptions this can be upgraded to a $\Z$-grading. The grading convention $|q_\ga| = \cz(\ga) + n-3$ is sometimes called the \hl{SFT index}.}
Given an SFT admissible almost complex structure $J$ on $\R \times Y$, we define a commutative differential graded algebra $C_\cha(Y)$ over $\Q$ as follows.

	\begin{figure}
	\centering
  \includegraphics[scale=.7]{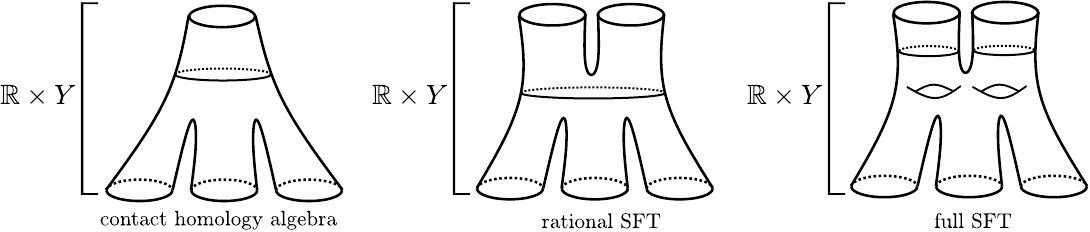}
  \caption{A schematic picture of the curves involved in the contact homology algebra, rational symplectic field theory, and full symplectic field theory.}
  \label{fig:SFT_schematic} 
	\end{figure}

\begin{itemize}

	\item As a graded commutative algebra, $C_\cha(Y)$ is the free graded commutative algebra $\calA_Y := \sym(V_Y) = \Q[q_\ga\;|\; \ga \in \calP_Y]$ with a formal variable $q_\ga$ for each good Reeb orbit $\ga$ of $Y$. 
	Here graded commutativity means that we have $q_{\ga_1}\cdots q_{\ga_{i}}q_{\ga_{i+1}}\cdots q_{\ga_k} = (-1)^{|q_{\ga_i}| |q_{\ga_{i+1}}|}q_{\ga_1}\cdots q_{\ga_{i+1}}q_{\ga_{i}}\cdots \ga_k$, and in particular $q_{\ga} q_{\ga} = 0$ if $|q_{\ga}|$ is odd.

	\item For $\ga \in \calP_Y$, the differential $\bdy_{\cha}(q_\ga)$ is given by
		\[\bdy_{\cha} (q_{\ga}) := \sum\limits_{A,\Ga_-}\tfrac{1}{\comb(\Ga_+,\Ga_-)} \cdot \#^\vir \ovl{\calM}_{0,0,A}^{\R \times Y,J}(\Ga_+;\Ga_-)/\R \cdot q_{\ga_1}\cdots q_{\ga_k},\] 
		where we put $\Ga_+ = (\ga)$, the sum is over tuples $\Ga_- = (\ga_1,\dots,\ga_k)$ of good Reeb orbits and homology classes $A$, and our convention is that $\#^\vir \ovl{\calM}_{0,0,A}^{\R \times Y,J}(\Ga_+;\Ga_-)/\R = 0$ unless the expected dimension is zero. Here $\comb(\Ga_+,\Ga_-) \in \Z_{\geq 1}$ is a combinatorial factor related to the ordering and covering multiplicities of the Reeb orbits in $\Ga_+,\Ga_-$, which we will mostly gloss over here (although it is necessary to get the correct gluing factors).\footnote{More precisely, we have $\comb(\Ga_+,\Ga_-) = \mu_{\Ga_-} \ka_{\Ga_-}$, where $\mu_{\Ga_-}$ is the number of permutation symmetries of the tuple $\Ga_-$, and $\ka_{\Ga_-}$ is the products of the covering multiplicies of the Reeb orbits in $\Ga_-$. As it happens there are a few other possible conventions in SFT which give rise to slightly different combinatorial factors.}
		The differential $\bdy_\cha$ is extended to all of $\calA_Y$ by the (graded) Leibniz rule.
\end{itemize}
The homology of $C_\cha(Y)$ is a graded commutative algebra called the \hl{contact homology algebra} of $Y$, which (assuming a suitable solution to the transversality problem as in \S\ref{sec:transversality}) depends only on the contactomorphism type of $Y$.

Counting similar types of curves in the completed symplectic cobordism $\wh{X}$ induces a DGA homomorphism $\Phi_\cha: C_{\cha}(Y_+) \ra C_{\cha}(Y_-)$ (maps like this induced by symplectic cobordisms are often called \hl{cobordism maps}).
More precisely, for $\ga \in \calP_{Y_+}$ we put
\begin{align*}
\Phi_\cha(q_\ga) := \sum\limits_{A,\Ga_-} \tfrac{1}{\comb(\Ga_+,\Ga_-)} \cdot \#^\vir \ovl{\calM}_{0,0,A}^{\wh{X},J_{\wh{X}}}(\Ga_+;\Ga_-) \cdot q_{\ga_1}\cdots q_{\ga_k},
\end{align*}
with $\Ga_+ = (\ga)$.
This extends to all of $\calA_{Y_+}$ by multiplicativity, or equivalently we can think of $\Phi_X$  as counting disconnected curves in $\wh{X}$ such that each component is rational with one positive end and many negative ends.


\sss

Next, we seek to incorporate all rational curves with any number of positive and negative ends. 
Since naively gluing two rational curves tends to produce a curve of higher genus, some care is needed to formulate the correct algebraic structure.
At this point it is useful to package together all counts of index one rational punctured curves in the symplectization $\R \times Y$ into a single generating function.
Let \[\frakB_Y := \calA_Y \ll p_\ga \;|\; \ga \in \calP_Y \rr\] denote the graded commutative algebra of formal power series in variables $p_\ga$ with $|p_\ga| = -\cz(\ga) + n-3$ for each good Reeb orbit, with coefficients in $\calA_Y = \Q[ q_\ga\;|\; \ga \in \calP_Y]$.
The \hl{RSFT Hamiltonian} is defined by
\begin{align*}
\hh_Y := \sum\limits_{\Ga_+,\Ga_-,A} \tfrac{1}{\op{comb}(\Ga_+,\Ga_-)} \cdot \#^\vir\ovl{\calM}_{0,0,A}^{\R \times Y,J}(\Ga_+;\Ga_-)/\R \cdot  p_{\ga_1^+}\cdots p_{\ga_{s_+}^+} q_{\ga^-_1}\cdots q_{\ga^-_{s_-}} \in \frakB_Y,
\end{align*}
where the sum is over all collections of good Reeb orbits $\Ga_\pm = (\ga_1^\pm,\dots,\ga_{s_\pm}^\pm)$ and homology classes $A$.

We use $\hh_Y$ to define a differential on $\frakB_Y$ as follows.
First, we equip $\frakB_Y$ with the Poisson bracket $\{-,-\}$ given by
\begin{align*}
\{f,g\} := \sum\limits_{\ga \in \calP_Y} \ka_\ga \left(\tfrac{\bdy f}{\bdy p_\ga}\tfrac{\bdy g}{\bdy q_\ga}- (-1)^{\deg(f) \deg(g)} \tfrac{\bdy g}{\bdy p_\ga}\tfrac{\bdy f}{\bdy q_\ga} \right)
\end{align*}
for any monomials $f,g \in \frakB_Y$, where $\ka_\ga$ denotes the covering multiplicity of the Reeb orbit $\ga$.
This turns $\frakB$ into a graded Poisson algebra.
It turns out that the curve counting relations carried by the boundaries of moduli spaces of index two rational punctured curves in the symplectization $\R \times Y$ can all be succinctly encoded into a single equation, the \hl{RSFT Hamiltonian master equation}:
\begin{align}\label{eq:RSFT_master}
\{ \hh_Y,\hh_Y\} = 0.
\end{align}
It then follows that the differential 
$\bdy_{\rsft} := \{\hh_Y,-\}$ on $\frakB_Y$ satisfies $\bdy_{\rsft}^2 = 0$, and it makes $\frakB_Y$ into a differential graded Poisson algebra, which we will denote by $C_{\rsft}(Y)$.
In particular, the homology of $C_{\rsft}(Y)$ is a graded Poisson algebra and a contact invariant of $Y$, which we will call the \hl{rational symplectic field theory} of $Y$.

Similarly, we can package all index zero rational $J_{\wh{X}}$-holomorphic curves in $\wh{X}$ into a generating function $\ff_{\wh{X}}$ called the \hl{RSFT potential} of $\wh{X}$, which is a power series in variables $p_\ga$ for $\ga \in \calP_{Y_+}$ and $q_\eta$ for $\eta \in \calP_{Y_-}$. 
The relations given by analyzing boundaries of index one rational curves in $\wh{X}$ translates into a single \hl{RSFT potential master equation} relating $\ff_{\wh{X}},\hh_{Y_+},\hh_{Y_-}$. 
Instead of a cobordism map, one can view $\ff_{\wh{X}}$ as producing a \hl{Lagrangian correspondence} which transforms $\hh_{Y_+}$ and $\hh_{Y_-}$ into each other (see \cite[\S2.3.2]{EGH2000}).

\sss

Finally, we incorporate curves of arbitrary genus.
In order to write down a generating function for all index one punctured curves in the symplectization $\R \times Y$, since the SFT compactness theorem requires an a priori bound on the genus, we must incorporate an additional formal variable $\hbar$.
Let $\frakW_Y$ be the graded associative algebra over $\Q$ with generators $q_\ga,p_\ga$ for $\ga \in \calP_Y$ (with the same gradings as before) and $\hbar$ with $|\hbar| = 2(n-3)$, subject to the relations that all generators graded commute except for 
\begin{align*}
[p_\ga,q_\ga] := p_\ga \star q_\ga - (-1)^{|p_\ga| |q_\ga|} q_\ga \star p_\ga = \ka_\ga \hbar
\end{align*}
(here $\star$ denotes the product on $\frakW_Y$).
We note that $\frakW$ is an example of a \hl{graded Weyl algebra}, because it can be faithfully represented as an algebra of formal differential operators acting on $\calA_Y[\hbar]$ on the left via the substitution $p_\ga \mapsto \ka_\ga \hbar \tfrac{\partial}{\partial q_\ga}$.
The \hl{full SFT Hamiltonian} is now defined by
\begin{align*}
\HH_Y := \sum\limits_{\Ga_+,\Ga_-,g,A} \tfrac{1}{\op{comb}(\Ga_+,\Ga_-)} \cdot \hbar^{g-1}\cdot \#^\vir\ovl{\calM}_{g,0,A}^{\R \times Y,J}(\Ga_+;\Ga_-)/\R \cdot  p_{\ga_1^+}\cdots p_{\ga_{s_+}^+} q_{\ga^-_1}\cdots q_{\ga^-_{s_-}} \in \tfrac{1}{\hbar}\frakW_Y.
\end{align*}

The relations induced by boundaries of index $2$ moduli spaces of punctured curves in $\R \times Y$ can now be encoded in a single \hl{full SFT Hamiltonian master equation}:
\begin{align}\label{eq:SFT_master}
\HH_Y \star \HH_Y = 0.
\end{align}
Note that this extends \eqref{eq:RSFT_master} in the sense 
$[\HH_Y,\HH_Y] = \tfrac{1}{\hbar}\{\hh_Y,\hh_Y \} + h.o.t.$.
In the quantum mechanical language of \cite{EGH2000}, $C_{\rsft}(Y)$ is the \hl{semi-classical approximation} of $C_{\sft}(Y)$, and $C_{\cha}(Y)$ is its classical approximation.

Since $\HH_Y$ is odd, we can equivalently write ~\eqref{eq:SFT_master} as $[\HH_Y,\HH_Y] = 0$,
where the graded commutator of homogeneous elements $F,G$ is defined by
$[F,G] := F \star G - (-1)^{\deg(F) \deg(G)} G \star F$.
It follows that the differential $\bdy_{\sft} := [\HH_Y,-]$ on $\frakW_Y$ satisfies $\bdy^2_{\sft} = 0$ and is a derivation with respect to the product $\star $. This makes $\frakW_Y$ into a differential graded algebra, which we denote by $C_{\sft}(Y)$.
In particular, the homology of $C_{\sft}(Y)$ is a graded associative algebra and a contact invariant of $Y$, which we call the \hl{symplectic field theory} of $Y$.
Similarly, the generating function of punctured curves of arbitrary genus in the cobordism $\wh{X}$ leads to rise to the \hl{full SFT potential} $\FF_{\wh{X}}$, which is related to $\HH_{Y_+}$ and $\HH_{Y_-}$ by the \hl{full SFT potential master equation}.

\subsection{Reformulation with only $q$ variables}\label{subsec:q-only}

	There are other ways of packaging the above curve counts into algebraic structures, which can sometimes be more convenient depending on the intended applications (see e.g. \cite{latschev2011algebraic,HSC,ganatra2020embedding,moreno2020landscape}).
	Notice that, in the above formulation, $C_\cha(Y)$ involves only the variables $q_\ga$, whereas $C_\rsft(Y)$ and $C_\sft(Y)$ require also the variables $p_\ga$.
	We now mention reformulations of these latter invariants without the $p_\ga$ variables, with the virtue that we get cobordism maps closely analogous to what we have for $C_\cha(Y)$.

	Recall that the graded Weyl algebra $\frakW_Y$ can be represented by formal differential operators acting on $\calA_Y\ll \hbar \rr$, where $\calA_Y = \Q[q_\ga\;|\; \ga \in \calP_Y]$.
	In particular, under this representation the full SFT Hamiltonian $\HH_Y$ corresponds to a map $\bdy_\sft^\qonly: \calA_Y\ll \hbar \rr \ra \calA_Y \ll \hbar \rr$, and the master equation $\HH_Y \star \HH_Y = 0$ translates into $(\bdy_\sft^\qonly)^2 = 0$.
	This makes $\calA_Y\ll \hbar \rr$ into a chain complex, which we denote by $C_\sft^\qonly(Y)$.
	The homology of $C_\sft^\qonly(Y)$ is a contact invariant of $Y$ which gives a different formulation of its full SFT.
	Note that although $C_\sft^\qonly(Y)$ is smaller than $C_\sft(Y)$ as an algebra, the differential $\bdy_\sft^\qonly$ does not satisfy a Leibniz rule, and hence the homology of $C_\sft^\qonly(Y)$ does not inherit a product.
	Rather, $\bdy_\sft^\qonly$ decomposes into a sum of differential operators of increasing orders, making $C_\sft^\qonly(Y)$ into a $\op{BV}_\infty$ algebra in the language of \cite[\S5]{cieliebak2009role}.
	Furthermore, one can show that a Liouville cobordism $X$ between $Y_+$ and $Y_-$ induces a $\op{BV}_\infty$ morphism $\Phi_\sft^\qonly: C_\sft^\qonly(Y_+) \ra C_\sft^\qonly(Y_-)$, which in particular is a chain map.

	It is also possible to reformulate rational symplectic field theory using only $q$ variables, although some extra care is needed to make sure we only glue two rational curves along a single pair of punctures.
	An algebraic description of RSFT with only $q$ variables as a chain complex was sketched in \cite{hutchings_2013}, and worked out in detail in \cite[\S3.4]{HSC} using the language of $\Li$ algebras. 
	Namely, we can view index one rational punctured curves in $\R \times Y$ as defining an $\Li$ algebra whose underlying chain complex is $C_\cha(Y)$. This means that we have graded symmetric operations $\otimes^{k} C_\cha(Y) \ra C_\cha(Y)$ for all $k \in \Z_{\geq 1}$ which satisfy the $\Li$ structure equations (an infinite sequence of quadratic relations). In particular, the \hl{bar complex} of this $\Li$ algebra is a chain complex $C_\rsft^\qonly(Y)$ whose underlying vector space is $\sym(\calA_Y)$, the double symmetric tensor algebra on $V_Y$.\footnote{Strictly speaking it is more natural to take the reduced bar complex -- see e.g. \cite[Def. 2.4]{HSC} for more details.}
	Moreover, the Liouville cobordism $X$ induces an $\Li$ homomorphism from $C_\cha(Y_+)$ to $C_\cha(Y_-)$, and in particular a chain map $\Phi_\rsft^\qonly: C_\rsft^\qonly(Y_+) \ra C_\rsft^\qonly(Y_-)$.
	A further refinement of this structure which takes into account the algebra structure on $\calA_Y$ is also described in \cite{moreno2020landscape} using the language of ``bi-Lie algebras'', and a detailed comparison between these $q$ variable only approaches to RSFT and the original formalism of Eliashberg--Givental--Hofer appears in \cite{latschev2022remarks}.

\subsection{Linearization}\label{subsec:linearize}

Observe that the algebra $\calA_Y$ has an increasing filtration by word length, but unfortunately the differential $\bdy_\cha$ is not in general nondecreasing with respect to this filtration, essentially due to the possibility of index $1$ planes in the symplectization $\R \times Y$.
In the absence of such planes, or more precisely when $\#^\vir \ovl{\calM}^{\R \times Y,J}_{0,0,A}((\ga);\nil)/\R = 0$ for all $\ga \in \calP_Y$, then $\bdy_\cha$ does preserve this word length filtration, and in this case we will say that $C_\cha(Y)$ is {\em trivially augmented}.
If $C_\cha(Y)$ is trivially augmented, then by restricting and projecting $\bdy_\cha$ to the subspace of words of length one, we get a differential $V_\calP \ra V_\calP$ which squares to zero.
In particular, this gives a chain complex which is much smaller than $C_\cha(Y)$, but it is not a priori a contact invariant of $Y$, 
since e.g. $C_\cha(Y)$ might not be trivially augmented for a different choice of contact form or almost complex structure.

In fact, given a unital DGA morphism $\aug: C_\cha(Y) \ra \Q$ (also known as an \hl{augmentation}), we can modify the CDGA $C_\cha(Y)$ so that it becomes trivially augmented.
Namely, let $F^\aug: \calA_Y \ra \calA_Y$ be the algebra isomorphism defined on generators by $F^\aug(q_\ga) = q_\ga + \aug(q_\ga)$, and define a new differential $\bdy_\cha^\aug$ on $\calA_Y$ by 
$\bdy_\cha^\aug := (F^\aug) \circ \bdy_\cha \circ (F^\aug)^{-1}$.
Noting that $(F^\aug)^{-1}(q_\ga) = q_\ga - \aug(q_\ga)$, we have $\bdy_\cha^\aug(q_\ga) = F^\aug \bdy (q_\ga)$, whose word length zero piece is $\aug(\bdy_\cha(q_\ga))$, and this vanishes since $\aug$ is an augmentation.
It follows that $\bdy_\cha^\aug$ is nondecreasing with respect to the word length filtration on $\calA_Y$, so we get an induced differential on the subspace of words of length one (i.e. $V_Y$), which we denote by $\bdy_\chlin$.
We denote the corresponding chain complex by $C_\chlin(\aug)$, and we refer to its homology as the \hl{linearized contact homology} of $Y$ with respect to the augmentation $\aug$.
The set of all linearized contact homologies over all augmentations of $C_\cha(Y)$ is expected to be a contact invariant of $Y$.\footnote{While the analogous invariant for Legendrian knots in $\R^3$ has been implemented to great effect in e.g. \cite{Chekanov_DGA}, to our knowledge the set of all linearized contact homologies for a closed contact manifold has not yet been implemented in full detail, due to some technical subtleties related to DGA homotopies (c.f. \cite[\S1.8]{Pardcnct}).}

Note that we can also consider the full homology of $\calA_Y$ with respect to the twisted differential $\bdy_\cha^\aug$. In fact, as an algebra this is just isomorphic to the usual contact homology algebra of $X$, since $\bdy_\cha^\aug$ is conjugate to $\bdy_\cha$, but nevertheless it is a somewhat nicer algebraic object since it carries a word length filtration.
We will denote the CDGA $(\calA_Y,\bdy_\cha^\aug)$ by $C_\chalin(X)$.


\sss

Now suppose that $X$ is a Liouville domain with contact boundary $Y$, i.e. $X$ is a Liouville cobordism with positive contact boundary $Y$ and empty negative boundary.
In this case, the cobordism map induced by $X$ is precisely an augmentation $\aug_X: C_\cha(Y) \ra \Q$.
This gives rise to a linearized chain complex $C_\chlin(X) := C_\chlin(\aug_X)$ whose corresponding homology is a symplectic invariant of $X$.
Moreover, the differential $\bdy_\chlin$ on $C_\chlin$ has a more appealing geometric description (at least heuristically) as 
a count of two level pseudoholomorphic buildings, where the top level is an index $1$ rational curve in $\R \times Y$ with one positive end, and the bottom level is a collection of index $0$ planes in $\wh{X}$, such that all but one of the negative ends of the upper level curve is matched with a plane in the lower level (see the left panel of Figure~\ref{fig:SFT_schematic_linearized}). It is useful to think such a configuration as a cylinder in $\R \times Y$ with extra negative ends capped by planes in $\wh{X}$ (these extra capped ends are called \hl{anchors} in \cite{bourgeois2012effect}).

\begin{rmk}
Sometimes we can rule out index one planes in $\R \times Y$ for degree or fundamental group reasons.
Then $C_\cha(Y)$ is already trivially augmented, i.e. $\aug_X(q_\ga) = 0$ for all $\ga \in \calP_Y$, and we have $C_\cha(Y) = C_\chalin(X)$.
In this case it is natural to think of $C_\chlin(X)$ as an effective stand in for $C_\cyl(Y)$.
\end{rmk}
\begin{rmk}
It can also be the case that $C_\cha(Y)$ has no augmentations whatsoever.
For example, this holds whenever the homology of $C_\cha(Y)$ is trivial, since then the empty word is a boundary and hence must be sent to both $1$ and $0$ in $\Q$ under any augmentation, which is impossible. 
Notably, this holds when the contact manifold $Y$ is overtwisted, and it follows that such $Y$ cannot admit any Liouville filling.
\end{rmk}

Similarly, we can use the Liouville filling $X$ of $Y$ (or more generally any abstract augmentation, suitably defined) to linearize the rational and full symplectic field theory of $X$, giving rise to symplectic invariants of $X$ which are potentially more tractable than the RSFT and full SFT of $Y$.
In essence, the augmentation induces a change of coordinates, after which our invariant becomes trivially augmented in the sense that there are no contributions from index $1$ curves with no negative ends in $\R \times Y$.
This results in somewhat nicer chain complexes with simpler differentials, and it also allows us to define intermediate invariants with simplified algebraic structures.
For example, the chain-level invariant $C_\sftlin^\qonly(Y)$ obtained by twisting the differential on $C_\sft^\qonly(X)$ by the augmentation induced by $X$ is a special type of $\op{BV}_\infty$ which is called an $\op{IBL}_\infty$ algebra in \cite{cieliebak2020homological}.
In the rational case, after twisting the differential of $C_\rsft^\qonly(Y)$ to obtain $C_\rsftlin^\qonly(X)$, there is a self-consistent substructure which counts rational curves in $\R \times Y$ with one negative end and many positive ends, plus additional anchors in $\wh{X}$. This structure
can be viewed as an $\Li$ algebra whose underlying chain complex is $C_\chlin(X)$, and in particular its bar complex $C_{\mathcal{B}\chlin}(X)$ is a chain complex with underlying vector space $\calA_Y$ (see \cite[\S3.4.3]{HSC}).
Note that, modulo the anchors, $C_{\mathcal{B}\chlin}(X)$ is an ``upside down'' version of the contact homology algebra $C_{\cha}(X)$; see Figure~\ref{fig:SFT_schematic_linearized} for a schematic diagram of these linearized structures.

\begin{rmk}
For a contact manifold $Y$, one can show that at the homology level the vanishing of the contact homology algebra of $Y$ is equivalent to the vanishing of the rational SFT or full SFT of $Y$, and when these vanish we say that $Y$ is \hl{algebraically overtwisted} (see \cite{bourgeois2010towards}). One potential downside is that the higher parts of SFT do not provide any additional information for distinguishing algebraically overtwisted contact manifolds from genuinely overtwisted ones.
A similar phenomenon appears in \cite[App. B]{ng2010rational} in the relative setting, where RSFT does not provide any interesting information for stabilized Legendrian knots.
\end{rmk}

	\begin{figure}
	\centering
  \includegraphics[scale=.75]{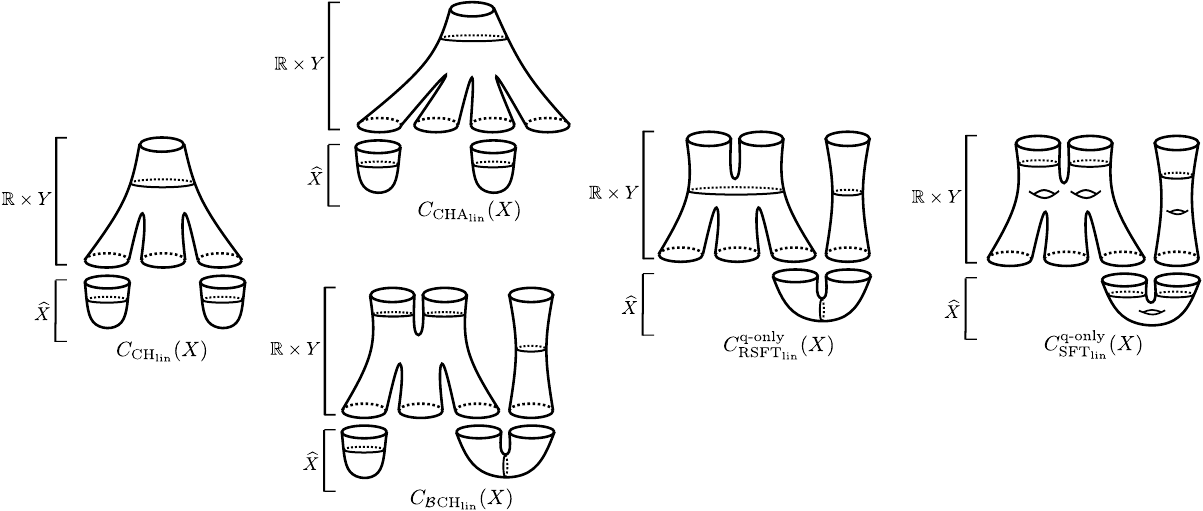}
  \caption{A schematic picture of the anchored curves involved after linearized the invariants of the contact manifold $Y$ induced by its Liouville filling $X$. Note that the invariants $C_\chlin(X)$ and $C_{\mathcal{B}\chlin}(X)$ have no natural unlinearized counterparts, whereas the remaining three invariants only differ by their unlinearized versions by a change of coordinates.}
  \label{fig:SFT_schematic_linearized} 
	\end{figure}

\section{Applications}\label{sec:applications}


There are many important applications of SFT in the literature, and undoubtedly plenty more yet to be discovered.
Noteworthy initial proofs of concept include distinguishing Legendrian knots \cite{Chekanov_DGA} and contact spheres \cite{ustilovsky1999infinitely}, nonfillability of overtwisted contact $3$-manifolds \cite{eliashberg_contact,MLYau_vanishing}, new recursive formulas for Gromov--Witten invariants \cite[\S2.9.3]{EGH2000}, and so on.
Incidentally, most of these early applications involve only rational curves with one positive end, but see e.g. \cite{latschev2011algebraic} for an application to obstructing symplectic cobordisms which relies on higher genus curves.

In this section we will content ourselves with a simple but beautiful argument which uses SFT to restrict the topology of Lagrangian submanifolds, following \cite[\S1.7]{EGH2000}.
This argument does not make use of the algebraic formalism discussed in \S\ref{sec:formalism}, but it does rely in an essential way on the SFT compactness theorem, and it also highlights the relevance of transversality.

\begin{thm}[\cite{viterbo1994properties}]\label{thm:hyp} 
Let $M$ be a smooth complex projective variety of complex dimension $n \geq 3$ which is uniruled, equipped with its K\"ahler symplectic form.
Let $L \subset M$ be a closed embedded Lagrangian submanifold. Then $L$ does not admit any Riemannian metric with negative sectional curvature.
\end{thm}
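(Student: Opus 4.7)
The plan is to combine uniruledness of $M$ with neck-stretching along the unit cotangent bundle $S^*L$, and use a Fredholm index calculation in $T^*L$---enabled by the very constrained Conley--Zehnder indices of closed geodesics on negatively curved manifolds---to rule out the existence of a limiting SFT building whose image meets a chosen point of $L$.

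Concretely, uniruledness provides a homology class $A \in H_2(M;\Z)$ such that the genus zero Gromov--Witten invariant of $M$ in class $A$ with a single point insertion is nonzero, so through any point of $M$ there passes a $J$-holomorphic sphere in class $A$. Choose $p \in L$, fix a Weinstein tubular neighborhood of $L$ symplectomorphic to a disc cotangent bundle $D^*L$ with contact boundary $S^*L$, and consider a one-parameter family $\{J_t\}$ of tame almost complex structures on $M$ which neck-stretches along $S^*L$. By the SFT compactness theorem, the moduli of $J_t$-holomorphic spheres through $p$ subsequentially converges to a stable pseudoholomorphic building $\mathbf{u}$, whose non-symplectization levels live in the completions $T^*L$ (Liouville filling of $S^*L$) and $\widehat{M\setminus D^*L}$ (symplectic cap). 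Since $p \in L \subset T^*L$, and $T^*L$ is exact (so admits no nonconstant closed holomorphic curves), there must exist a nonconstant component $C_0$ on the $T^*L$ side of $\mathbf{u}$ whose image contains $p$.

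The main step is to rule out such a $C_0$. Cartan--Hadamard plus negative sectional curvature imply that $L$ is aspherical and that every closed geodesic is a nondegenerate positive hyperbolic critical point of the energy functional on the free loop space with Morse index zero; equivalently, every Reeb orbit $\gamma$ of $S^*L$ is nondegenerate with $\cz(\gamma) = 0$ under the vertical trivialization of $\xi \subset T(S^*L)$. Combined with $c_1(T^*L) = 0$ (which follows from the isomorphism $T(T^*L) \cong TL \otimes_\R \C$ of complex bundles), the Fredholm index formula~\eqref{eq:ind} yields that any genus zero curve in $T^*L$ with $s_+ \geq 1$ positive punctures, one marked point, and the codimension $2n$ constraint of sending the marked point to $p$ has virtual dimension
\[
(n-3)(2-s_+) + 2 - 2n \;\leq\; (n-3) + 2 - 2n \;=\; -n-1 \;<\; 0
\]
for $n \geq 3$. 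Hence the constrained moduli space of such $C_0$ is virtually empty, so $\mathbf{u}$ cannot exist, contradicting the nontriviality of the Gromov--Witten invariant.

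The hardest part is making the SFT framework above rigorous. One needs: (i) a virtual perturbation scheme in which the neck-stretched Gromov--Witten count of $M$ factors as a sum over buildings with the contribution of each building expressible as a product over its constituent pieces, and (ii) an Obamacare-type axiom ensuring that a virtually empty piece annihilates the whole building's contribution (so that the index obstruction on $C_0$ truly kills everything). For the present theorem, the Donaldson divisor approach of Cieliebak--Mohnke~\cite{CM2} was designed with exactly this sort of Lagrangian topology application in mind and supplies what is needed. One must also verify the case where the marked point sits on a genus zero ghost component, but in that case some nonconstant non-ghost $T^*L$-side component attached to the ghost still passes through $p$, and the same index obstruction applies.
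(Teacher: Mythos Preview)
Your overall strategy---neck-stretch along $S^*L$, extract a nonconstant bottom-level component through $p$ in $T^*L$, and derive a contradiction from the negative constrained Fredholm index---matches the paper's proof exactly, as does the geodesic/Conley--Zehnder input and the index computation.

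Where you diverge is in handling transversality, and here you work much harder than necessary. You invoke a full virtual perturbation scheme (Cieliebak--Mohnke, an Obamacare-type axiom, factorization over buildings) and call this ``the hardest part''. The paper avoids all of this with a one-line trick: given the nonconstant component $C$ in $T^*L$ through $p$, pass to its underlying \emph{simple} curve $\underline{C}$. Since $\underline{C}$ is somewhere injective and the almost complex structure $J_{T^*L}$ was chosen generically, $\underline{C}$ is regular by the standard transversality theorem for simple curves, and hence has nonnegative index. But $\underline{C}$ still passes through $p$ (the image is unchanged) and still has at least one positive puncture, so the same formula gives $\operatorname{ind}(\underline{C}) \le (n-3)-(2n-2) = -n-1 < 0$, a contradiction. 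No virtual technology is required; only classical generic transversality for simple curves. The Cieliebak--Mohnke machinery you cite is genuinely needed for the \emph{nonpositive} curvature extension (the paper's Theorem~\ref{thm:CM}), where Morse--Bott geodesic families and subtler index bookkeeping prevent the simple-curve shortcut---but for strictly negative curvature the argument is elementary.
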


\NI Here the uniruled condition means that there is a rational curve through every point in an open dense subset of $M$, and this holds whenever $M$ is Fano (see \cite{miyaoka1986numerical,kollar2013rational}). We could also replace this with the assumption that $M$ has a nonvanishing Gromov--Witten invariant with one point constraint.

The relevance of sectional curvature in Theorem~\ref{thm:hyp} is the following.
Let $S^*L \subset T^*L$ denote the unit cosphere bundle with respect to a Riemannian metric $g$ on $L$.
Then the (unparametrized) Reeb orbits in $S^*L$ are in bijective correspondence with oriented closed geodesics in $L$. We will let $\wt{\al}$ denote the Reeb orbit lift to $S^*L$ of a closed oriented geodesic $\al$ in $L$.
If $L$ is orientable, there is a canonical way to define Conley--Zehnder indices for Reeb orbits in $S^*L$, such that $\cz(\wt{\al})$ equals the Morse index of $\al$ and the Chern number term in \eqref{eq:ind} vanishes.
When $g$ has nonpositive sectional curvature, it is a classical fact that all geodesics $\al$ in $L$ are homotopically essential and satisfy $\morse(\al) = 0$ (see e.g. \cite{csirikcci2024morse}).
When $g$ has strictly negative sectional curvature, the geodesics of $L$ are isolated and lift to nondegenerate Reeb orbits in $S^*L$.
Thus for punctured curves in $T^*L$ with asymptotics $\wt{\al}_1,\dots,\wt{\al}_{s}$, we have
\begin{align}\label{eq:ind_hyp}
\ind\,\calM_{g,0,A}^{T^*L,J}((\wt{\al}_1,\dots,\wt{\al}_{s});\nil) = (n-3)(2-2g - s).
\end{align}
Note that we necessarily have $s \geq 2$ since $L$ has no contractible geodesics, and hence the quantity in \eqref{eq:ind_hyp} is nonpositive.

As for the uniruledness assumption in Theorem~\ref{thm:hyp}, according to \cite{kollar2013rational,ruan_virtual} this implies that there exists a homology class $A \in H_2(M)$ such that for any compatible almost complex structure $J$ on $M$ and any point $p \in M$ there is a $J$-holomorphic sphere $u: \CP^1 \ra M$ with $[u] = A$ which passes through $p$. 
This is closely related to nonvanishing of the genus zero Gromov--Witten invariant of $M$ in homology class $A$ with one point constraint, although strictly speaking the latter is defined in terms of stable maps, which could a priori have several components.

\begin{proof}[Proof of Theorem~\ref{thm:hyp}]
Suppose by contradiction that $L$ admits a Riemannian metric with negative sectional curvature.
We will assume that $L$ is orientable (otherwise one can argue in terms of the orientable double cover of $L$).
By Weinstein's Lagrangian neighborhood theorem, there is a neighborhood $U$ of $L$ in $M$ which is symplectomorphic to the $\eps$-disk cotangent bundle $T_\eps^*L$, and after rescaling the metric we may assume $\eps = 1$.

Let $J_1,J_2,J_3,\dots$ be a sequence of compatible almost complex structures on $M$ which realizes neck stretching along $\bdy U \cong S^*L$. Recall that this roughly means that these become cylindrical (and in particular translation invariant) on increasingly large collar neighborhoods of $\bdy U$.
In the limit we arrive at a split symplectic cobordism whose pieces are identified with $T^*L$ and $M \setminus L$, each carrying an SFT admissible almost complex structure. We can assume that the almost complex structure $J_{T^*L}$ on $T^*L$ is chosen generically.

By the discussion preceding the proof, we can fix $A \in H_2(M)$ and generic $p\in M$ such that for each $i \in \Z_{\geq 1}$ there exists a $J_i$-holomorphic sphere $u_i: \CP^1 \ra M$ with $[u_i] = A$ which passes through $p$.
By the SFT compactness theorem, there is a subsequence which converges to a pseudoholomorphic building consisting of a bottom level in $T^*L$, some number of intermediate symplectization levels in $\R \times S^*L$, and a top level in $M \setminus L$, where some component $C$ in the bottom level passes through $p$.

Let $\uvl{C}$ be the underlying simple curve of $C$ (so $\uvl{C} = C$ unless $C$ is a multiple cover).
We will view $\uvl{C}$ as an asymptotically cylindrical $J_{T^*L}$-holomorphic curve in $T^*L$ with an additional marked point in its domain which is required to map to $p$.
By generic transversality for simple curves and our genericity assumption on $J_{T^*L}$, we can assume that $\uvl{C}$ is regular, and in particular has nonnegative index.
 On the other hand, by \eqref{eq:ind_hyp} the index of $\uvl{C}$ is $(n-3)(2-2g-s) - (2n-2) < 0$, where the last term takes into account the point constraint, so this gives a contradiction.

\end{proof}

Evidently Theorem~\ref{thm:hyp} breaks down if we replace negative sectional curvature with nonpositive sectional curvature, due to the existence of Lagrangian tori in complex projective space (e.g. the Clifford torus).
Nevertheless, the following result puts nontrivial restrictions on such Lagrangians. We denote by $\CP^n$ complex projective space with its Fubini-Study symplectic form scaled such that lines have area $\pi$.

\begin{thm}[{\cite[Thms. 1.1 and 1.2]{CM2}}]\label{thm:CM} Let $L \subset \CP^n$ be a closed Lagrangian submanifold admitting a metric of nonpositive sectional curvature. Then there exists a smooth map $f:(D, \partial D) \to (\CP^n, L)$ with $f^* \omega \ge 0$ and $$0 < \int_D f^* \omega \le \frac{\pi}{n+1}.$$
Moreover, if $L$ is orientable and either monotone or a torus, then we can take the disk $f$ to have Maslov index $2$.
\end{thm}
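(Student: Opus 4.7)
The plan is to run a neck-stretching argument parallel to the proof of Theorem~\ref{thm:hyp}, replacing the strict negativity of the sectional curvature by a quantitative incidence condition on the rational curves we degenerate, and extracting a small-area disk by a pigeonhole on the symplectic areas of the bottom-level components. First, Weinstein's Lagrangian neighborhood theorem identifies a tubular neighborhood $U$ of $L$ in $\CP^n$ with the unit disk cotangent bundle $T^*_1 L$ of the given nonpositively curved metric $g$. A classical fact in Riemannian geometry is that on a manifold of nonpositive sectional curvature every closed geodesic is homotopically essential and has vanishing Morse index. In the Morse--Bott framework (to accommodate possible families of closed geodesics), the Reeb orbits of $S^*L$ therefore lift non-contractible geodesics and have Conley--Zehnder index $0$, so the index formula~\eqref{eq:ind_hyp} continues to give $\ind\,\calM_{g,0,A}^{T^*L,J}((\wt\al_1,\dots,\wt\al_s);\nil)=(n-3)(2-2g-s)$, with $s\geq 1$ and no contractible asymptotics.

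Next, choose a sequence of SFT-admissible almost complex structures $\{J_k\}$ on $\CP^n$ realizing neck stretching along $\partial U=S^*L$. Using the nonvanishing of the Gromov--Witten invariant of $\CP^n$ counting lines through a point (more robustly, through the refined incidence data introduced below), for each $k$ produce a $J_k$-holomorphic line $u_k:\CP^1\to\CP^n$ meeting a prescribed configuration of marked points on $L$. By the SFT compactness theorem, a subsequence converges to a pseudoholomorphic building with a bottom level in $\widehat{T^*L}$, some symplectization levels in $\R\times S^*L$, and a top level in the completion of $\CP^n\setminus U$. Each bottom-level component is a genus-zero asymptotically cylindrical curve with only positive punctures (since $T^*L$ has empty negative boundary), and by Stokes' theorem its Fubini--Study area equals the sum of the lengths of the geodesics underlying its asymptotic Reeb orbits, and is in particular strictly positive. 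The total symplectic area of the bottom level is at most $\pi$.

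The heart of the argument, and the main technical obstacle, is to force the bottom level to split into at least $n+1$ distinct components of positive area, after which a pigeonhole immediately produces a smooth map $f:(D,\partial D)\to(\CP^n,L)$ of area at most $\pi/(n+1)$, obtained by gluing a short asymptotic cap onto the minimum-area component. The mechanism is to strengthen the single-point incidence constraint on $u_k$ to an $(n+1)$-fold incidence pattern encoded via a sequence of generic Cieliebak--Mohnke Donaldson divisors \cite{CM1,CM2}: intersecting the degree-one curve with such divisors records transversal passages through $L$, and because each asymptotic Reeb orbit of a bottom-level component is homotopically essential in $L$ (by the nonpositive curvature), no two intersection points can be joined through a contractible plane in $T^*L$. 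The nonpositive curvature hypothesis is used essentially here, as it rules out exactly the recombination phenomena that would reduce the number of bottom-level components below $n+1$. The Donaldson divisor framework additionally bypasses the multiply-covered cylinder pathologies of Section~\ref{sec:transversality}, which would otherwise spoil transversality for the neck-stretched family.

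Finally, for the Maslov 2 refinement, in the monotone case the Maslov class and symplectic area are proportional on $\pi_2(\CP^n,L)$, so the area bound $\pi/(n+1)$ forces the Maslov index of $f$ to be the minimal positive value allowed by the monotonicity constant; standard considerations for monotone Lagrangians in $\CP^n$ then identify this minimum with $2$. For $L\cong T^n$, the Maslov class on a Lagrangian torus is always even-valued (see e.g. the parity argument via the $H_1$-Maslov correspondence), and positivity of area combined with the area bound leaves $\mu(f)=2$ as the only possibility. The most delicate ingredient throughout is the disk-counting in the bottom level: everything else is a careful bookkeeping of the SFT compactness theorem, Stokes' theorem, and classical Maslov-index arithmetic.
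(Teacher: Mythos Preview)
Your proposal has the right overall shape (neck stretching a line class along $\partial U$), but the central mechanism is not the one that actually works, and your replacement is not justified. The paper does \emph{not} try to force the bottom level to split into $n+1$ separate components via multiple incidence points on $L$; instead it imposes a single \emph{local tangency constraint} $\lll \T_D^{(n)}p\rrr$ at a point $p$ in the Weinstein neighborhood, which is a codimension $2n-2$ condition concentrated on one component. An index computation then shows that the bottom-level component $C$ carrying this constraint has index $2k-2-2n$ (with $k$ its number of positive punctures), so regularity forces $k\geq n+1$; Riemann--Hurwitz handles the multiply covered case. The $k$ disks $f_1,\dots,f_k$ are then the \emph{upper-level} pieces sitting above each puncture of $C$, compactified to maps $(D,\partial D)\to(\CP^n,L)$ whose areas sum to $\pi$. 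Your proposed mechanism (``$(n+1)$-fold incidence pattern via Donaldson divisors'' recording ``transversal passages through $L$'', with nonpositive curvature ruling out ``recombination'') is vague and does not produce any such lower bound on the number of pieces; the role of the Donaldson divisors in \cite{CM2} is solely to achieve transversality in the torus case, not to impose incidence conditions.

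Both of your Maslov~$2$ arguments are also incorrect. In the monotone case, the area bound $\pi/(n+1)$ alone does not pin down the Maslov index without knowing the monotonicity constant; the paper instead uses that all $k\geq n+1$ disks have positive even Maslov number summing to $2c_1([\CP^1])=2(n+1)$, forcing $k=n+1$ and each $\mu(f_i)=2$. In the torus case, without monotonicity there is no relation between area and Maslov index, so ``positivity of area combined with the area bound'' cannot determine $\mu(f)$; the paper explicitly notes that positivity of $\mu(f_i)$ is not automatic here, and the actual argument requires full regularity (achieved via the Cieliebak--Mohnke scheme) to show each $f_i$ has $\mu(f_i)\leq 2$, which combined with $\sum\mu(f_i)=2(n+1)$ yields the conclusion.
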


\NI In particular, the second statement in the case when $L$ is a torus verifies a 1988 conjecture of Audin \cite{audin1988fibres} stating that Lagrangian tori in $\C^n$ bound Maslov $2$ disks.

\begin{proof}[Proof sketch of Theorem~\ref{thm:CM}]
	
The proof idea, originally suggested by Yasha Eliashberg, is to extend the neck stretching argument used in the proof of Theorem~\ref{thm:hyp}. Now the point constraint above is replaced with a higher index local tangency constraint.
This means that we consider rational curves in $\CP^n$ which pass through a chosen point $p$ and are tangent to order $m-1$ (i.e. contact order $m$) to a generically chosen local holomorphic divisor $D$ through $p$.\footnote{This constraint is denoted by $\lll \T_D^{(m)}	p \rrr$ in \S\ref{subsec:geom_constr} below.}
This roughly amounts to specifying the $(m-1)$-jet of the curve at a point, thereby imposing a constraint of (real) codimension $2n+2m-4$.
We will restrict to the line class $[\CP^1] \in H_2(\CP^n)$ and put $m=n$, so that we expect a finite number of such curves which are $J$-holomorphic for any generic compatible almost complex structure $J$.
In fact, by \cite[Prop. 3.4]{CM2} the number of such curves is precisely $(n-1)!$, and in particular nonzero.

Now we examine how these curves degenerate as we stretch the neck along the boundary of a small Weinstein neighborhood of $L$ as in the proof of Theorem~\ref{thm:hyp}.
Note that the geodesics of $L$ (and hence also the Reeb orbits of $S^*L$) typically appear in families of dimension at most $n-1$, but we can either work in a Morse--Bott setting or make a small generic perturbation to achieve nondegenerate Reeb dynamics.
At any rate, in the neck stretching limit there must be some pseudoholomorphic building consisting of a bottom level in $T^*L$, some number of intermediate symplectization levels in $\R \times S^*L$, and a top level in $\CP^n \setminus L$, where components in the bottom level carry the local tangency constraint.
As in the proof of Theorem~\ref{thm:hyp}, curves in the bottom level are $J_{T^*L}$-holomorphic, with $J_{T^*L}$ a generic SFT admissible almost complex structure on $T^*L$.
A straightforward calculation shows that the index of a single curve $C$ carrying the local tangency constraint is $2k-2-2n$, where $k$ is the number of positive punctures of $C$.
In particular, if such a $C$ is simple then we can assume that it has nonnegative index, and hence $k \geq n+1$. 
In fact, by passing to the underlying simple curve and inspecting the Riemann--Hurwitz formula, one can show that $k \geq n+1$ holds also in the case when $C$ is a multiple cover. 
In principle the local tangency constraint could also lie on a ghost component, but in this case one can show that the constraint is effectively carried by a union of nonconstant components in the limiting building, and the total number of positive punctures of all bottom level curves must still be at least $n+1$.
Since our pseudoholomorphic building has total genus zero, we can combine its remaining components into $k$ smooth disks $f_1,\dots,f_k: (D,\bdy D) \ra (\CP^n,L)$, each having positive symplectic area. 
Since the sum of their areas is bounded from above by $\pi$, it follows that at least one $f_i$ must have area at most $\tfrac{\pi}{k+1} \leq \tfrac{\pi}{n+1}$.

Moreover, if $L$ is monotone and orientable, then the Maslov numbers of each of the disks $f_1,\dots,f_k$ must be positive and even, and hence at least two.
Since these add up to $2c_1([\CP^1]) = 2(n+1)$, we conclude that $k=n+1$ and each of $f_1,\dots,f_k$ has Maslov number $2$.

Finally, suppose that $L$ is a torus but not necessarily monotone.
In this case we are not guaranteed that each $f_i$ has positive Maslov number.
On the other hand, if we assume that all relevant moduli spaces are regular, then the picture of our pseudoholomorphic building simplifies considerably, with no symplectization levels and each component in the top and bottom levels having index zero. 
In this case, some further index considerations show that each $f_i$ has Maslov number at most $2$, and hence at least $n$ of $f_1,\dots,f_k$ have Maslov number exactly equal to $2$.
To justify the regularity assumption, \cite{CM1,CM2} develops a detailed perturbation scheme based on domain dependent almost complex structures and curves with extra marked points constrained to lie on a Donaldson divisor (this forces the domains of all relevant curves to be stable).
\end{proof}

\begin{rmk}
Most of the technical difficulty in the above proof lies in the last part (i.e. the proof of Audin's conjecture), which is reflected in the long time span between that the papers \cite{CM1,CM2} and the original SFT paper \cite{EGH2000}.
\end{rmk}

\section{Extensions and further developments}\label{sec:further}

In this final section, we briefly outline various further directions in which the theory sketched above can be developed. Our list is by no means exhaustive, but it should at least convey the vast scope of symplectic field theory and its potential for future expansion. Some of these extensions are already discussed carefully in the original SFT papers and their immediate followups, while others are still under active development and/or are more speculative.

\subsection{Non-exact symplectic cobordisms, group ring coefficients, and twisted functoriality}\label{subsec:nonexact}

In the algebraic set up above we assumed that our cobordism $X$ is Liouville, so in particular its symplectic form is exact.
Among other things, this offers the useful simplification that the energy of an asymptotically cylindrical curve $C$ in $\wh{X}$ is determined via Stokes' theorem by the periods of its asymptotic Reeb orbits, independent of the homology class of $C$.
However, most of the analysis entering the SFT compactness theorem holds equally well if we relax the exactness condition on the symplectic form in the interior of $X$, letting $X$ be a symplectic cobordim whose boundary components are contact type hypersurfaces (this is sometimes called a ``strong symplectic cobordism'').
In this case, in order to compensate for the lack of a priori energy bounds and get well-defined SFT potentials we must incorporate additional group ring coefficients (suitably completed) which record the homology classes of our pseudoholomorphic curves.
Incidentally, working over the group ring can be useful even in the exact case, in order to probe more refined topological features on the target space.

\begin{rmk}\label{rmk:group_ring}
As explained in \cite[\S1.5]{EGH2000}, after making some additional topological assumptions and choices, we can associate to each asymptotically cylindrical curve in $\wh{X}$ an absolute homology class in $H_2(X)$. This is convenient for working over the group ring $\Q[H_2(X)]$ and passing to a completion with respect to the area functional $H_2(X) \ra \R$.
\end{rmk}

When $X$ is non-exact, it is no longer true that every asymptotically cylindrical pseudoholomorphic curve in $\wh{X}$ must have at least one positive end, so this somewhat complicates cobordism map functoriality. For example, the cobordism $X$ does not typically induce a map $C_\cha(Y_+) \ra C_\cha(Y_-)$ (think of the vertical flip of Figure~\ref{fig:bad_breaking}).
However, as pointed out by Cieliebak--Latschev following Fukaya \cite{fukaya2006application}, what we have instead is a twisted version of functoriality.
Namely, we can define a deformed version $\wt{\bdy}_\cha$ of the differential on $C_\cha(Y_-)$, and a DGA morphism $C_\cha(Y_+) \ra \wt{C}_\cha(Y_-)$, where the latter carries the deformed differential.
More specifically, the count of rigid planes in $\wh{X}$ with negative Reeb orbit asymptotics defines an element $\mm_{\wh{X}}$ in (a suitably completed group ring version of) $C_\cha(Y_-)$ which is a Maurer--Cartan element with respect to the RSFT $\Li$ structure (c.f. \S\ref{subsec:q-only}), and $\wt{C}_\cha(Y_-)$ is the corresponding deformation by $\mm_{\wh{X}}$ in the sense of Maurer--Cartan theory (see e.g. \cite{Yalin_MC} or \cite[\S2]{Fukaya-deformation}).

A similar twisted functoriality framework also exists for the $q$ variable only versions of 
rational and full SFT (as well as their linearizations), where the corresponding Maurer--Cartan elements take into account more general index zero curves in $\wh{X}$ with no positive ends. See e.g. \cite{fukaya2006application,cieliebak2009role,cieliebak2020homological,HSC} for more details and applications.

\subsection{Relaxing the contact condition}\label{subsec:relax_ctct_cond}

We can also relax the condition that $Y$ be a contact manifold, entering the wider world of \hl{stable Hamiltonian manifolds} (see \cite{cieliebak2015first}). 
Roughly, this means that $Y^{2n-1}$ carries a maximally nondegenerate two-form $\om$ and a one-form $\la$ such that $\la \wedge \om^{(n-1)} > 0$ and $\ker(\om) \subset \ker(d\la)$.
A typical example is given by the ``magnetic cosphere bundle'' $S^*Q$ of closed smooth manifold $Q$, where $\la$ is the canonical contact form and $\om = d\la + \pi^*\be$, with $\be$ a closed two-form on $Q$ and $\pi: S^*Q \ra Q$ projection to the base.
It is still possible to define the symplectization $\R \times Y$, but a key point is that its symplectic form need not be exact.
Similar to the contact case (i.e. when $\om = d\la$), there is a well-defined Reeb vector field $R$ characterized by $\om(R,-) = 0$ and $\la(R) = 1$.
Moreover, there is a class of almost complex structures on $\R \times Y$ which are \hl{symmetric, cylindrical, adjusted to $\om$}, for which the SFT compactness theorem naturally carries over to punctured curves with Reeb orbit asymptotics in $\R \times Y$ (see \cite[\S2]{BEHWZ}).
 Similarly, we can consider the completion $\wh{X}$ of a compact symplectic cobordism $X$ between stable Hamiltonian manifolds $Y_+$ and $Y_-$, and there is a corresponding well-behaved theory of asymptotically cylindrical curves in $\wh{X}$.

Remarkably, stable Hamiltonian structures allow us to view Floer theory as a special case of symplectic field theory. Namely, given a closed symplectic manifold $M$ with a time-dependent Hamiltonian $H: M \times S^1 \ra \R$, we consider the stable Hamiltonian structure on $M \times S^1$ with $\la = dt$ and $\om = dH \wedge dt$.
One can check that the Reeb orbits which wind once around the $S^1$ factor are in bijective correspondence with the $1$-periodic orbits of $H$, each time-dependent almost complex structure on $M$ corresponds to an SFT admissible almost complex on $\R \times M \times S^1$, and pseudoholomorphic cylinders in the latter precisely correspond to Floer cylinders in $M$.
This perspective immediately extends to give various generalizations of Floer theory, e.g. by replacing $M \times S^1$ with the mapping torus of some symplectomorphism $M \ra M$ (see e.g. \cite{fabert2009contact,fabert2020higher}), by allowing Reeb orbits with higher winding numbers, by incorporating more general rational or higher genus curves, etc.

For another interesting class of a stable Hamiltonian structures, let $Y$ be a principal circle bundle over a symplectic manifold $M$, with $\la$ a connection one-form and $\om$ the pull back of the symplectic form from $M$. The Reeb vector field $R$ is then the infinitesimal generator of the circle action.
Note that if the symplectic form on $M$ happens to be the curvature of the connection then $\la$ is in fact a contact form, and in this case the contact manifold $Y$ is called the \hl{pre-quantization} of the symplectic manifold $M$. 

Very recently, Fish--Hofer \cite{fish2023feral} have developed a theory of punctured pseudoholomorphic curves which may have infinite Hofer energy and need not be asymptotic to periodic orbits.
By a neck stretching argument applied to these so-called \hl{feral curves}, they are able to detect closed invariant sets in arbitrary compact regular level sets of a Hamiltonian $H: \R^4 \ra \R$.
It is interesting to ask how much of SFT can be extended to feral curves, with Reeb orbits replaced by more general closed invariant sets of Hamiltonian vector fields.


\subsection{Morse--Bott Reeb dynamics}\label{subsec:MB}

In the above, we typically assumed that our contact manifolds are equipped with contact forms $\al$ having nondegenerate Reeb dynamics, which in particular implies that there are only finitely many Reeb orbits of bounded period.
Although any contact form can be made nondegenerate by a small perturbation, many contact forms arising in nature enjoy extra symmetries which force the Reeb orbits to come in continuous families.
In the nicest case these Reeb orbit families are Morse--Bott, and rather than perturbing them away it is more natural to develop a Morse--Bott framework for symplectic field theory. Such a Morse--Bott approach to (cylindrical) contact homology was initiated in \cite{Bourgeois_MB} and subsequently studied in e.g. \cite{bourgeois2009exact,bourgeois2009symplectic,bourgeois2012effect}, with an especially promising approach to Morse--Bott analytic foundations appearing recently in \cite{yao2206cascades}.
A key point is that SFT compactness implies holomorphic curves with respect to approximating nondegenrate contact forms limit in the Morse--Bott setting to \hl{cascades}, which are hybrid objects combining punctured pseudoholomorphic curves with gradient flowlines of chosen Morse functions on the Morse--Bott loci.
It is natural to expect that all structures arising from SFT can be adapted to the Morse--Bott setting via a cascade approach.

\subsection{Evaluation constraints}\label{subsec:eval_constr}

We can also consider punctured pseudoholomorphic curves in a symplectization or symplectic cobordism which carry extra marked points, and then use evaluation maps to impose constraints at these marked points.
Among other things, this provides a natural way to incorporate curves of higher index into the SFT formalism, by cutting down their index with extra constraints.
The most standard approach is to require the marked points to pass through chosen cycles in the target space, in which case one expects the resulting invariants to depend only on the homology classes of these cycles.

For $i = 1,\dots,k$, let 
\[\ev_i: \ovl{\calM}_{g,k,A}^{\wh{X},J}(\Ga_+;\Ga_-) \ra \wh{X}\] denote the evaluation map at the $i$th marked point.
By analogy with Gromov--Witten theory, it tempting to pick cohomology classes $B_1,\dots,B_k \in H^*(\wh{X})$ and then integrate the cohomology class $\ev_1^*(B_1) \cup \cdots \cup \ev_k^*(B_k)$ over $\ovl{\calM}_{g,k,A}^{\wh{X},J}(\Ga_+;\Ga_-)$ to obtain a numerical invariant whenever the degree matches the dimension of the moduli space. 
However, an important complication is that, in contrast with Gromov--Witten theory, these compactified moduli spaces typically have boundary strata of expected codimension one, and hence the integral is not well-defined (even virtually) without more care.

Suppose first that we have compactly supported cohomology classes $B_1,\dots,B_k \in H^*_c(\wh{X})$, and choose $\mho_1,\dots,\mho_k \subset \wh{X}$ be Poincar\'e dual cycles.\footnote{Strictly speaking, we cannot always represent $\mho_1,\dots,\mho_k$ as smoothly embedded submanifolds, but we could use e.g. pseudocycles (see \cite[\S6.5]{JHOL}) or smooth simplicial chains. In \cite[\S2.3]{EGH2000}, the authors instead uses differential forms $\theta_1,\dots,\theta_k$ representing $B_1,\dots,B_k$, integrating $\ev_1^{-1}(\theta_1)\cup \cdots \cup \ev_k^{-1}(\theta_k)$ over the compactified moduli space $\ovl{\calM}_{g,k,A}^{\wh{X},J}(\Ga_+;\Ga_-)$. At a technical level, the latter requires making sense of integrals over (possibly very singular) compactified moduli spaces, while the former must deal with transversality of evaluation maps.}
Put 
\[\ovl{\calM}_{g,0,A}^{\wh{X},J}(\Ga_+;\Ga_-)\lll \mho_1,\dots,\mho_k\rrr := \ev_1^{-1}(\mho_1) \cap \cdots \cap \ev_k^{-1}(\mho_k),\]
which we interpret as the compactified moduli space of curves in $\ovl{\calM}_{g,0,A}^{\wh{X},J}(\Ga_+;\Ga_-)$ carrying the additional incidence constraints $\lll \mho_1,\dots,\mho_k\rrr$.

Recall that the DGA map $\Phi_\cha: C_\cha(Y_+) \ra C_\cha(Y_-)$ counts (at least heuristically) possibly disconnected curves in $\wh{X}$, such that each component has index zero and genus zero, with one positive end and many negative ends.
By instead counting the same curves but with the constraint $\lll \mho_1,\dots,\mho_k\rrr$ distributed amongst the components, we get chain map 
$\Phi_\cha\lll \mho_1,\dots,\mho_k\rrr: C_\cha(Y_+) \ra C_\cha(Y_-)$.
Note that this map is not a DGA morphism (i.e. it is not multiplicative) due to the way the constraints are distributed, but we can modify it to become one by allowing each constraint to repeat arbitrarily many times.
Namely, introduce a formal variable $t_i$ for each $\mho_i$, and define a $\Q\ll t_1,\dots, t_k\rr$-linear DGA morphism $\Phi_\cha \lll \mho_1^\bullet,\dots,\mho_k^\bullet\rrr: C_\cha(Y_+)\ll t_1,\dots,t_k\rr \ra C_\cha(Y_-)\ll t_1,\dots,t_k\rr$ by
\begin{align*}
\Phi_\cha \lll \mho_1^\bullet,\dots,\mho_k^\bullet\rrr(x) := \sum\limits_{j_1,\dots,j_k \geq 0} \Phi_\cha \lll \underbrace{\mho_1,\dots,\mho_1}_{j_1},\dots,\underbrace{\mho_k,\dots,\mho_k}_{j_k}\rrr(x) \,t_1^{j_1}\cdots t_k^{j_k},
\end{align*}
for $x \in C_\cha(Y_+)$.
By a homotopy argument, the induced map on homology should depend only on the compactly support cohomology classes $B_1,\dots,B_k$.\footnote{At a basic level, the idea is that given homologous cycles $\mho,\mho'$ and a chain $\wt{\mho}$ with $\bdy \wt{\mho} = \mho - \mho'$, the count of curves with evaluation constraints in $\wt{\mho}$ should give a chain homotopy between $\Phi_\cha \lll \mho \rrr$ and $\Phi_\cha \lll \mho' \rrr$.}

Similarly, for symplectization curves we have evaluation maps 
\[\ev_i: \ovl{\calM}_{g,k,A}^{\R \times Y,J}(\Ga_+;\Ga_-)/\R \ra Y,\]
and for cycles $\mho_1,\dots,\mho_k$ in $Y$ we define a deformed differential
$\bdy_\cha \lll \mho_1^\bullet,\dots,\mho_k^\bullet \rrr: \calA_{Y_+}\ll t_1,\dots,t_k\rr \ra \calA_{Y_-}\ll t_1,\dots,t_k\rr$
which roughly counts index one rational curves in $\R \times Y$ with one positive end and many negative ends, with extra marked points mapping to various copies of $\mho_1,\dots,\mho_k$, modulo target translations.
The homology of the resulting DGA depends only on the homology classes of $\mho_1,\dots,\mho_k$.

We can also combine the above two pictures by considering noncompact cycles $\mho_1,\dots,\mho_k$ in $\wh{X}$ which are cylindrical at infinity and represent noncompactly supported cohomology classes $B_1,\dots,B_k \in H^*(\wh{X})$.
Restricting these to the ends and then projecting gives cycles $\mho_1|_{Y_\pm},\dots,\mho_k|_{Y_\pm}$ in $Y_\pm$.
Then as above we can deform the DGAs $C_\cha(Y_\pm)$ by counting curves in $\R \times Y_\pm$ with evaluation constraints in $\mho_1|_{Y_\pm},\dots,\mho_k|_{Y_\pm}$ in $Y_\pm$, and counting curves in $\wh{X}$ with evaluation constraints in $\mho_1,\dots,\mho_k$ gives a DGA morphism $\Phi_\cha \lll \mho_1^\bullet,\dots,\mho_k^\bullet \rrr$ between these deformed DGAs.

The above discussion also naturally extends to rational and full SFT. For instance, in the $q$ variable only approach we get a deformed differential $\bdy_\sft^\qonly \lll \mho_1|_{Y_\pm},\dots,\mho_k|_{Y_\pm} \rrr$ on $\calA_{Y_\pm}\ll \hbar ,t_1,\dots,t_k\rr$ which gives rise to a deformed chain complex $C_\sft^\qonly(Y_\pm) \lll \mho_1^\bullet,\dots,\mho_k^\bullet \rrr$,
along with a deformed cobordism map $\Phi_\sft^\qonly: C_\sft^\qonly(Y_+) \lll \mho_1^\bullet,\dots,\mho_k^\bullet\rrr \ra C_\sft^\qonly(Y_-) \lll \mho_1^\bullet,\dots,\mho_k^\bullet\rrr$.
The claim is that on the homology level (or more precisely up to suitable chain homotopies) these depend only on the cohomology classes of $\mho_1,\dots,\mho_k$.

\begin{rmk}\label{rmk:U_map}
As a special case of the above, we can consider the trivial symplectic cobordism $X = Y \times [0,1]$. Note that $\wh{X}$ is identified with $\R \times Y$, but here we are ignoring the $\R$ action by target translations.
Then for any cycle $\mho$ in $Y$, we get a DGA endomorphism $\Phi_\cha\lll \mho^\bullet \rrr: C_\cha(Y) \ll t \rr \ra C_\cha(Y)\ll t \rr$.
When $\mho$ is a point, this is analogous to the $U$ map in embedded contact homology (see \cite[\S3.8]{Hlect}).
\end{rmk}
\begin{rmk}
Suppose that $Z$ is a smoothly embedded codimension two contact submanifold of $Y$ (e.g. a transverse knot in the three-sphere).
If we work with an almost complex structure $J$ on $\R \times Y$ which preserves $\R \times Z$, then all $J$-holomorphic curves in $\R \times Y$ must intersect $\R \times Z$ nonnegatively (except for possibly those contained in $\R \times Z$). By recording these homological intersection numbers, \cite{cote2024homological} defines a deformation of $C_\cha(Y)$ which is sensitive to the contact isotopy class of $Z$.

\end{rmk}

\subsection{Local geometric constraints}\label{subsec:geom_constr}

There are many other ways to impose geometrically meaningful constraints on punctured pseudoholomorphic curves.
Compared with the evaluation constraints discussed in \S\ref{subsec:eval_constr}, these may have advantages in terms of what types of buildings they can degenerate into, how computable they are, and so on.
For simplicity, let us focus on constraints for punctured curves in $\wh{X}$ which are localized near a point $p$ in the target space.
The simplest example is a point constraint $\lll p \rrr$, which played a central role in the above proof of Theorem~\ref{thm:hyp}.
Meanwhile, the proof of Theorem~\ref{thm:CM} utilized local tangency constraints $\lll \T_D^{(m)}p\rrr$, which were defined in \cite{CM2} and further studied in e.g. \cite{tonk,McDuffSiegel_counting}.
Compared with several ordinary point constraints $\lll p_1,\dots,p_k\rrr$, local tangency constaints have the advantage that they must be carried by a single component in any limiting pseudoholomorphic building, whereas the constraint $\lll p_1,\dots,p_k\rrr$ could become divided amongst the components.\footnote{Strictly speaking local tangency constraints also get divided amongst nearby components when the component carrying the marked point becomes a ghost (i.e. constant), but we will gloss over this technicality here.}

One can also consider \hl{blowup constraints} by considering curves lying in various homology classes in the blowup of $\wh{X}$ at $p$. Under the projection map to $\wh{X}$, these can be interpreted (at least heuristically) as curves in $\wh{X}$ which have several branches passing through $p$.
We could also further impose a local tangency constraint on each branch passing through $p$, giving \hl{multibranched tangency constraints} (see \cite[\S2.3]{McDuffSiegel_counting}).
Or, we can instead consider \hl{multidirectional tangency constraints} $\lll \T_{D_1}^{(m_1)}\cdots \T_{D_n}^{(m_n)}p\rrr$ by imposing tangency orders $m_1,\dots,m_n$ on a single branch of the curve but with respect to several generic local divisors $D_1,\dots,D_n$ at $p$, which roughly forces our curves to have a cusp singularity modeled on $t \mapsto (t^{m_1},\dots,t^{m_n})$ at the point $p$ (see \cite[\S3]{cusps_and_ellipsoids}).

Alternatively we can take a small neighborhood $U$ of $p$ with smooth boundary and consider curves in the symplectic completion of $X' := X \setminus U$, which we view as having positive boundary $Y_+$ and negative boundary $Y_- \sqcup \bdy U$.
Specifying the negative Reeb orbit asymptotics in $\bdy U$ is akin to imposing additional (a priori less geometric) constraints on curves in $\wh{X}$.
In fact, by neck stretching along $\bdy U$, we can convert any geometric constraint localized near $p$ to a (possibly very complicated) linear combination of extra negative end constraints in $\bdy U$ as above.
The precise correspondence depends on the symplectomorphism type of $U$, which we could take to be a round ball or a more general ellipsoid boundary $\bdy E(a_1,\dots,a_n)$ for chosen $a_1,\dots,a_n \in \R_{>0}$.
For instance, in the case $n=2$, when $U$ is a skinny ellipsoid $E_\op{sk} = E(a_1,a_2)$ with $a_2 \gg a_1$, extra negative ends in $\bdy U$ closely match up with local tangency constraints at $p$ (see \cite[\S4.1]{McDuffSiegel_counting}). More generally, for $U = E(\eps a_1,\eps a_2)$ with coprime $a_1,a_2 \in \Z_{\geq 1}$ and $\eps > 0$ small, extra negative ends in $\bdy U$ roughly agree with the multidirectional tangency constraint $\lll \T_{D_1}^{(a_1)}\T_{D_2}^{(a_2)} p\rrr$ (see \cite[\S3]{cusps_and_ellipsoids}).

\subsection{Gravitational descendants}

While the SFT invariants discussed in \S\ref{sec:formalism} are based on counting curves with all possible conformal structures on the domain, we can also try to define more refined invariants by imposing restrictions on these conformal structures.
Using the forgetful map $\ovl{\calM}^{\wh{X},J}_{g,k,A}(\Ga_+;\Ga_-) \ra \ovl{\calM}_{g,k+s_+ + s_-}$, where $\Ga_\pm$ consists of $s_\pm$ Reeb orbits, we could restrict our counts to curves lying over some chosen cycle in the Deligne--Mumford space $\ovl{\calM}_{g,k+s_+ + s_-}$.
By analogy with Gromov--Witten theory, a natural approach to imposing such constraints would be to adapt the construction of $\psi$ classes (see e.g. \cite{kock2001notes}) to the setting of SFT moduli spaces.
Namely, for $i=1,\dots,k$, let $\calL_i$ be the complex line bundle over $\calM^{\wh{X},J}_{g,k,A}(\Ga_+;\Ga_-)$ whose fiber over a curve $C$ is the cotangent line of the domain Riemann surface of $C$ at its $i$th marked point.
Assuming this line bundle extends over the compactification, we define $\psi_i \in H^2(\ovl{\calM}^{\wh{X},J}_{g,k,A}(\Ga_+;\Ga_-))$ to be its first Chern class.
We can then try to integrate the cup product $\psi_1^{j_1} \cup \cdots \cup \psi_k^{j_k}$ over $\ovl{\calM}^{\wh{X},J}_{g,k,A}(\Ga_+;\Ga_-)$ for some $j_1,\dots,j_k \in \Z_{\geq 0}$, possibly after imposing evaluation constraints $\mho_1,\dots,\mho_k$ at the marked points.

However, similar to the discussion in \S\ref{subsec:eval_constr}, this runs into serious complications stemming from the fact that $\ovl{\calM}^{\wh{X},J}_{g,k,A}(\Ga_+;\Ga_-)$ has codimension one boundary strata.
In some special cases it may possible to rule out codimension one boundary strata (e.g. this holds for symplectic ellipsoids by Conley--Zehnder index considerations), but in general we must choose specific differential forms representing $\psi_1,\dots,\psi_k$ with carefully prescribed behavior over the boundary strata. One proposal for doing so appears in \cite{Fabert_descendants_2011,fabert2011string}.
Packaged together, these should give various gravitational descendant cobordism maps, for instance
$\Phi_\cha\lll \psi^{j_1}\mho_1,\dots,\psi^{j_k}\mho_k\rrr: C_\cha(Y_+) \ra C_\cha(Y_-)$ and its RSFT and SFT extensions.

\subsection{Relative symplectic field theory}\label{subsec:relSFT}

The open string analogue of SFT, called \hl{relative symplectic field theory}, assigns algebraic invariants to Legendrian submanifolds of contact manifolds and Lagrangian cobordisms between them.
More precisely, we consider pairs $(Y_\pm,\La_\pm)$, with $Y_\pm^{2n-1}$ contact manifolds and $\La^{n-1} \subset Y_\pm$ Legendrian submanifolds, and also pairs $(X,L)$, where $X$ is a symplectic cobordism between $Y_+$ and $Y_-$ and $L \subset X$ is a Lagrangian submanifold with $L \cap Y_\pm = \La_\pm$.
The invariants are defined in terms of proper pseudoholomorphic maps $(\Si,\bdy \Si) \ra (\R \times Y,\R \times \La_\pm)$ and $(\Si,\bdy \Si) \ra (\wh{X},\wh{L})$, where $\Si$ is a Riemann surface with boundary which has both interior and boundary punctures, and $\wh{L}$ is given by attaching cylindrical Lagrangian ends to $L$. 
As in the absolute case, the interior punctures are asymptotic to closed Reeb orbits in $Y_\pm$, while the boundary punctures are asymptotic to Reeb chords of the Legendrians $\La_\pm$.

The Legendrian analogue of $C_\cha(Y)$ is roughly the (noncommutative) DGA generated by the Reeb chords of $\La$, with differential counting index one disks in $(\R \times Y,\R \times \La)$ with one positive boundary puncture and many negative boundary punctures, modulo target translations.
In general this must be taken as a module over $C_\cha(Y)$ due to the possibility of bubbling off planes, but in the presence of a filling $X$ of $Y$ (or an abstract augmentation) we can instead count disks with boundary punctures in $\R \times Y$ with extra anchors in $X$.
For Legendrian links of $\R^3$, Chekanov \cite{Chekanov_DGA} gave a purely combinatorial construction of the Legendrian contact homology algebra as a DGA, which he famously used to distinguish two Legendrian $5_2$ knots (up to Legendrian isotopy) which could not be distinguished by classical methods.
In higher dimensions, a version of the Legendrian contact homology algebra for Legendrians in $\R^{2n+1}$ appears in \cite{ekholm2005contact}.

When considering more general rational or higher genus curves with boundary, a new complication called \hl{string degeneration} arises, which is that a chord $(I,\bdy I) \subset (\Sigma,\bdy \Sigma)$ could get contracted down to a point, thereby degenerating $\Si$ into a (possibly disconnected) Riemann surface with two boundary points pinched together (see e.g. \cite[Fig. 18]{cieliebak2009role}).
Thus we must either make some additional assumptions which rule out string degenerations, or else add terms to the differential which account for these degenerations. 
An approach to defining relative RSFT by incorporating operations from string topology is sketched in \cite[App. A]{cieliebak2009role}, while a different approach based on taking multiple copies of $\La$ to rule out string degenerations between different components is given in \cite{ekholm2008rational}.
In case of links in $\R^3$, Ng \cite{ng2010rational} has given a fully combinatorial model for relative RSFT in the spirit of Chekanov's approach.
To our knowledge, a detailed algebraic formalism for relative SFT in full genus has not yet appeared in the literature.

\subsection{Non-equivariant SFT and comparisons with Floer theory}\label{subsec:ext:Floer}

As we pointed out in \S\ref{subsec:relax_ctct_cond}, Floer homology can be viewed as a special case of symplectic field theory for stable Hamiltonian structures.
At the same time, many SFT invariants for Liouville domains $X$ and their Lagrangian submanifolds $\La$ 
are expected to have isomorphic counterparts in Floer theory.
Indeed, \cite{bourgeois2009exact,bourgeois2015erratum} discusses an isomorphism between the linearized contact homology $H(C_\chlin(X))$ and positive $S^1$-equivariant symplectic cohomology $\sh_{S^1,+}(X;\Q)$ with rational coefficients.
Here ``positive'' means roughly that we quotient out by those constant Hamiltonian orbits which contribute to the ordinary homology of $X$, although \cite{Bourgeois-Oancea_equivariant} also discusses an enlarged ``filled'' version of $C_\chlin(X)$ whose homology should correspond to the full $S^1$-equivariant symplectic cohomology $\sh_{S^1}(X;\Q)$.
Also, the fact that we get $S^1$-equivariant symplectic cohomology reflects the fact that symplectic field theory is ``by default'' $S^1$-equivariant, i.e. it is generated by unparametrized Reeb orbits, whereas Hamiltonian Floer homology is generated by {\em parametrized} Hamiltonian orbits.
However, following \cite{bourgeois2009exact,HN_hypertight}, it is also possible to define a nonequivariant version of $C_\chlin(X)$, in which each Reeb orbit $\ga$ is viewed as an $S^1$-family of parametrized Reeb orbits which then contributes two generators $\wh{\ga},\wc{\ga}$ in a Morse--Bott model (these correspond to the maximum and minimum of a perfect Morse function on $\ga$).
In other words, each flavor of linearized contact homology is expected to match up with a corresponding version of symplectic cohomology.

Next, we can ask whether the higher parts of SFT (i.e. $C_\cha,C_\rsft,C_\sft$) have counterparts on the Floer side. 
Corresponding to the contact homology algebra $C_\chalin(X)$, Ekholm--Oancea \cite{Ekholm-Oancea_DGAS} constructed an analogous CDGA structure extending positive equivariant symplectic cochains $\sc_{S^1,+}(X)$ (as well as nonequivariant and relative versions).
We could alternatively view this CDGA as the cobar construction applied to an $\Li$ coalgebra structure with underlying chain complex $\sc_{S^1,+}(X)$.
A key insight in \cite{Ekholm-Oancea_DGAS} is to study the Floer equation on Riemann spheres with one positive and many negative punctures, where the Floer-theoretic weights at the negative ends are allowed to vary over a simplex.
The resulting operations are ``secondary'' in the sense that the operations with fixed weights are essentially trivial, whereas the ones with varying weights have shifted degrees which match up with the corresponding terms for $C_\chalin(X)$.
It seems plausible that the rational symplectic field theory of $X$ also has an extended counterpart in symplectic cohomology, defined in terms of genus zero Floer-theoretic curves with many positive and many negative punctures, with simplex-varying weights at the negative ends.
Similarly, one can ask for a higher genus version of symplectic cohomology which serves as a counterpart for full SFT.

This bridge between SFT and Floer theory can be useful in several ways. For one thing, as above it suggests various refinements of symplectic cohomology based on known or expected algebraic structures in symplectic field theory.
At the same time, as mentioned in \S\ref{sec:transversality}, it suggests that we could use higher algebraic structures in symplectic cohomology as an ersatz for SFT and as a way to circumvent transversality difficulties. It is arguably a virtue that certain constructions are more transparent in SFT (e.g. neck stretching, $S^1$-equivariance)  while others are more natural in Floer theory (Hamiltonian spectral invariants, filled versions).
Incidentally, all of the algebraic formalism discussed in \S\ref{sec:formalism} should have corresponding ``hat check'' nonequivariant versions, and these ought to recover the default $S^1$-equivariant SFT invariants by a suitable notion of algebraic $S^1$ quotient.

\subsection{Embedded contact homology}\label{subsec:ECH}

Embedded contact homology (ECH) is an analogue of SFT for three-dimensional contact manifolds 
which is also defined using asymptotically cylindrical punctured pseudoholomorphic curves in symplectizations, but which restricts to only those curves which are embedded (roughly speaking).
By construction, it is isomorphic to a version of three-dimensional Seiberg--Witten Floer homology, giving a three-dimensional analogue of Taubes' theorem \cite{taubes2000seiberg} equating the Seiberg--Witten invariants of a four-dimensional symplectic manifold $M$ with a count of (roughly) embedded pseudoholomorphic curves in $M$.
Although ECH is defined using many of the same basic ingredients as full SFT for contact three-manifolds $Y$, it also has many important differences, for instance it depends only on the diffeomorphism type of $Y$ (although there is a \hl{contact element} which is sensitive to the contact structure -- see \cite[\S2.2]{Hutchings_quantitative_ECH}).
The ECH differential is defined rigorously in the foundational papers directly using holomorphic curves, while the cobordism map is only defined (at least the time of writing) using the isomorphism with Seiberg--Witten Floer homology.\footnote{To give a somewhat tongue-in-check quote by Michael Hutchings, ``whereas embedded contact homology is only defined in dimension three, symplectic field theory is not defined in any dimension''.}
The ECH chain complex does not have a product structure, but it does have a special grading coming from the so-called ECH index, and it also enjoys a $U$ map (c.f. Remark~\ref{rmk:U_map}).
Embedded contact homology has given many impressive applications to low dimensional contact and symplectic geometry, 
 although its precise relationship with SFT is still not fully understood (see \cite{hutchings_ECH_and_SFT} for more details).

\subsection{Quantitative invariants}\label{subsec:quant}

By default, the algebraic invariants discussed in \S\ref{sec:formalism} are qualititative invariants of contact manifolds and symplectic cobordisms, which means in particular that as homology elements they depend only on $Y$ up to contactomorphism and on $X$ up to Liouville homotopy (see \cite[\S11.2]{cieliebak2012stein} and \cite[\S2.6]{EGH2000}).
However, since our punctured curves have nonnegative energy which is determined (at least in the exact case) by the actions of the positive and negative asymptotic Reeb orbits, it follows that our invariants come with a natural $\R$-filtration by action, and this is preserved by the differentials and other algebraic operations.
By incorporating this filtration, we can use SFT to define refined numerical invariants which are sensitive to quantitative features of contact forms and symplectic forms.
	
One popular application is to obstruct symplectic embeddings between star-shaped domains in $\R^{2n}$. In dimension $2n=4$, the idea of using filtered embedded contact homology to obstruct symplectic embeddings between domains in $\R^4$ was pioneered by Hutchings in \cite{hutchings2016beyond}. In particular, it was shown in \cite{McDuff_Hofer_conjecture} that the symplectic capacities defined using ECH give a complete set of obstructions for symplectic embeddings between four-dimensional ellipsoids.

In higher dimensions, new obstructions based on SFT moduli spaces were discovered in \cite{HK}, \cite{hind2015some}. These were formalized and extended to symplectic capacities based on filtered SFT in \cite{HSC}, following analogous capacities from Floer homology constructed in \cite{Gutt-Hu}.
More recent applications of filtered SFT to Hamiltonian and Reeb dynamics have also appeared e.g. in \cite{hutchings2022elementaryspectral,edtmair2022elementary,chaidez2022contact,chaidez2023elementary}.

Quantitative embedding problems provide a compelling testing ground for Eliashberg's ``holomorphic curves or nothing'' metaprinciple, which posits that any symplectic geometric construction not obstructed by pseudoholomorphic curves or classical topology should in fact exist. 
For example, this would suggest that there exists a symplectic embedding from one ellipsoid $E := E(a_1,\dots,a_n)$ to another one $E' := E(a_1',\dots,a_n')$ unless it is obstructed by some pseudoholomorphic curve or by volume considerations (which require $a_1\cdots a_n \leq a_1' \cdots a_n'$).
To understand what it means to be obstructed by a pseudoholomorphic curve in this setting, observe first that there is an inclusion $E(\eps a_1,\dots,\eps a_n) \subset E(a_1',\cdots,a_n')$
for $\eps > 0$ sufficiently small. Let $X$ be the complementary cobordism, with contact boundaries $Y_+,Y_-$.
For degree parity reasons, the differentials on $C_\cha(Y_\pm),C_\rsft^\qonly(Y_\pm),C_\sft^\qonly(Y_\pm)$ all vanish identically, and the action filtrations are easy to write down combinatorially (recall Ex~\ref{ex:mult_cov}).\footnote{We will assume for simplicity that the area factors $(a_1,\dots,a_n)$ of $E$ are rationally independent, so that $\bdy E$ has nondegenerate Reeb dynamics, and similarly for $\bdy E'$.}
This essentially means that the cobordism maps $\Phi$ induced by $\wh{X}$ are deformation invariant and they see all index zero curves in $\wh{X}$ (or rather those with nonzero algebraic counts). 
In particular, given a hypothetical symplectic embedding of $E$ into $E'$, its complementary cobordism would be Liouville homotopic to $X$, and thus its cobordism map would agree with $\Phi$.
The upshot is that for each nonzero term of $\Phi$ we can read an obstruction to symplectically embedding $E$ into $E'$, and together these are expected to give a complete list of obstructions (along with volume, and possibly incorporating other geometric constraints and so on).
This leads to the following open problem, which highlights how subtle computations in symplectic field theory can be, even in seemingly simple examples.

\begin{problem}\label{prob:ell_cob_map}
Given an inclusion of ellipsoids $E(a_1,\dots,a_n) \subset E(a_1',\cdots,a_n')$, let $X$ denote the complementary cobordism, with contact boundaries $Y_+,Y_-$.
Compute the corresponding RSFT potential $\ff_{\wh{X}}$ and full SFT potential $\FF_{\wh{X}}$.
\end{problem}
\NI Ideally, a solution to Problem~\ref{prob:ell_cob_map} should entail an effective procedure for determining whether the coefficient of a given monomial in $\ff_{\wh{X}}$ or $\FF_{\wh{X}}$ is nonzero.

\subsection{Invariants of contact domains}\label{subsec:ctct_domains}

Eliashberg--Kim--Polterovich \cite{eliashberg2006geometry} defined a version of cylindrical contact homology for domains $U \subset\R^{2n} \times S^1$, and they used this to prove a contact geometric analogue of Gromov's nonsqueezing theorem.
Their invariant is defined roughly as a direct limit of the cylindrical contact homologies of $\R^{2n} \times S^1$ in the action window $(0,\eps)$ and restricted to Reeb orbits which wind once around the $S^1$ factor, where the colimit is with respect to a sequence of contact forms which approach zero inside of $U$ and stay fixed outside of $U$.
This invariant has the useful feature that for domains of the form $U = V \times S^1$, with $V \subset \R^{2n}$, it is isomorphic to a version of the symplectic homology of $V$ in the action window $(-\infty,-1)$.
A $\Z/p$-equivariant analogue was shown in \cite{fraser2016contact} to give stronger contact nonsqueezing results, and various related invariants have also been defined using microlocal sheaves \cite{chiu_nonsqueezing,zhang2021capacities} and generating functions \cite{sandon2011contact,FSB2023contact}.
It is natural to ask whether deeper layers of SFT (e.g. $C_\cha,C_\rsft,C_\sft$) could be used to define invariants of domains $U$ in $\R^{2n} \times S^1$ (or in more general contact manifolds), and whether these could detect more refined versions of contact nonsqueezing.

\subsection{Integrable systems}

It is observed in \cite[\S2.2]{eliashberg_sft_and_applications} that the symplectic field theory of the simplest contact manifold, namely the circle, naturally produces an infinite system of commuting integrals of the dispersionless Korteweg--de Vries equation (KdV) $u_t + u u_x = 0$, with higher genus curves relating to its quantization.
More generally, it is expected that rational symplectic field theory associates to any circle bundle over a closed symplectic manifold an infinite dimensional integrable Hamiltonian partial differential equation, with full symplectic field theory giving its quantization (see e.g. \cite{dubrovin2016symplectic, fabert2011string,kluck2010symplectic}).
While this connection has been worked out in detail in specific examples, its full ramifications for symplectic field theory and Hamiltonian PDEs remains to be explored.

\subsection{Effect of Weinstein handle attachment}

Recall that most known examples of Liouville domains are Weinstein, which means that $X^{2n}$ is built up from the ball by attaching various subcritical (index less than $n$) Weinstein handles and critical (index equal to $n$) Weinstein handles.
There is a general expectation that most (qualitative) pseudoholomorphic curve invariants $X$ are unchanged by subcritical handle attachment, while the key symplectic topological features of $X$ are encoded by the attaching Legendrian spheres $\La_1,\dots,\La_k$ of the critical handles.
In particular, subcritical Weinstein domains are governed by an h principle, their symplectic cohomology vanishes, and by \cite{yau2004cylindrical} the cylindrical contact homology of the contact boundary is determined by the ordinary homology of $X$.

In \cite{bourgeois2012effect} and the followup paper \cite{bourgeois2011symplectic}, the authors give various formulas for pseudoholomorphic curve invariants of $X$ in terms of pseudoholomorphic invariants of $\La_1,\dots,\La_k$.
In particular, they describe the linearized contact homology $C_\chlin(X)$ in terms of the cyclic homology of the Legendrian contact homology algebra of the link $\La_1 \cup \cdots \cup \La_k$.\footnote{More precisely, this describes an extended version of $C_\chlin(X)$ with additional generators coming from the ordinary homology of $X$.}
It is quite desirable to extend these formulas in order to describe the higher SFT invariants $C_\cha(Y),C_\rsft(Y),C_\sft(Y)$ in terms of the relative SFT of $\La_1 \cup \cdots \cup \La_k$, as this could open up the possibility of computing symplectic field theory for many interesting contact manifolds.
In particular, in dimension $2n=4$ one might hope for a purely combinatorial formula for these invariants in terms of polygons in a Legendrian link diagram (c.f. \cite{ekholm2015legendrian}).

\subsection{Relationship with relative Gromov--Witten theory}\label{subsec:rel_GW}

An important class of examples coming from algebraic geometry arises when $M$ is a smooth complex projective variety and $D$ is a nonsingular ample divisor.
We can find a small tubular neighborhood $U$ of $D$ whose boundary $Y := \bdy U$ is a contact type hypersurface in $M$, where $Y$ is contactomorphic to a pre-quantization of the symplectic manifold $D$ (recall \S\ref{subsec:relax_ctct_cond}).
In particular, $Y$ is foliated by closed Reeb orbits and we can consider its Morse--Bott symplectic field theory as in \S\ref{subsec:MB}. Alternatively, after a small perturbation of the contact form, there is one simple nondegenerate Reeb orbit for each critical point of a chosen Morse function on $D$.
Moreover, the complement $X := \ovl{M \setminus D}$ carries the structure of a Liouville domain (in fact a Stein domain -- see e.g. \cite[\S 4b]{Seidel_biased_view}), while the symplectic completion $\wh{U}$ is identified with the total space of the normal bundle of $D$ in $M$.
In particular, generalizing the example of a line in the complex projective plane from \S\ref{sec:glance}, we can work with an almost complex structure on $\wh{U}$ for which the projection to $D$ is holomorphic, and thus we can understand punctured pseudoholomorphic curves in both $\wh{U}$ and $\R \times Y$ in terms of closed pseudoholomorphic curves in $D$ and their meromorphic lifts.

One can show by index considerations that there are no contributing index one punctured curves in $\R \times Y$ (at least in the absence of any additional constraints), so that the invariants $C_\cha(Y),C_\rsft(Y),C_\sft(Y)$ all have trivial differentials.
Note that we can view an asymptotically cylindrical pseudoholomorphic curve in $\wh{X}$ as a punctured curve in $M$ with removable singularities, such that after filling in the punctures a positive end asymptotic to a Reeb orbit of multiplicity $\ka$ corresponds to a point intersecting $D$ with contact order $\ka$.
By translating punctured curves into closed curves in this way, the SFT compactified moduli space of punctured curves in $\wh{X}$ is closely related to the moduli space of stable relative maps in $(M,D)$ used to define relative Gromov--Witten theory.
In fact, as observed in \cite[Rmk. 5.9]{BEHWZ}, one can essentially view the compactness theorems proved in \cite{Ionel-Parker_relative_GW,IP2004symplecticsum,Li-Ruan_symplectic_surgery} as special cases of the SFT compactness theorem. 
In particular, the type of decomposition along a divisor appearing in the symplectic sum formula can really be viewed as a special case of stretching the neck.
However, an important subtlety is that the relative stable maps compactification considers neck levels modulo an action by $\C^*$ rather than $\R$, and thereby has only boundary strata of (real) expected codimension $2$. This allows one to define relative Gromov--Witten invariants taking values in rational numbers (as opposed to say chain complexes), whereas the SFT compactification a priori has codimension $1$ boundary strata.
This is a recurring theme in symplectic field theory: individual structure coefficients are not a priori invariant under changes of the almost complex structure and other data, although in certain nice situations they may turn out to have stronger invariance properties.

\subsection{Normal crossings divisors and extended field theory structure}

It is natural to try to extend the discussion in \S\ref{subsec:rel_GW} by allowing the divisor $D$ to have normal crossings singularities (e.g. a nodal algebraic curve in a smooth complex projective surface).
In this situation we can still find a small neighborhood $D$ whose boundary is a smooth contact hypersurface $Y$, with the Reeb dynamics on $Y$ controlled but much more complicated than in the pre-quantization case.
Roughly, with respect a natural stratification on $D$, the Reeb orbits in $Y$ come in various families which are $\TT^{r-1}$ torus bundles over open strata $\calS$, where $\calS$ has dimension $n-r$ and where $2n = \dim_\R M$ (see \cite{mclean2012growth,tehrani2018normal,mcleanslides}).
Putting $X := \ovl{M \setminus U}$ as before, we find that the SFT of $\wh{X}$ is closely related to the relative Gromov--Witten invariants of $(M,D$) as in \cite{Ionel_rel_GW_NC}.

Pairs $(M,D)$ as above with $D$ a normal crossings divisor arise naturally when considering multiple cuts as in \cite[\S1.1]{venugopalan2020tropical}, which can be thought of as a multidirectional generalization of neck stretching along several intersecting hypersurfaces.
Note that the multiple cut reduces to the usual symplectic cut as defined by Lerman \cite{lerman1995symplectic} in the case of a single smooth hypersurface which is the fiber of the moment map for a Hamiltonian circle action.
Gluing and compactness results for pseudoholomorphic curves along multiple cuts are discussed in the recent manuscript \cite{venugopalan2020tropical} and in the work of Brett Parker on exploded manifolds (see e.g. \cite{parker2015gluing,parker2016notes}). 
Here instead of pseudoholomorphic buildings one encounters more complicated configurations (indexed by tropical graphs) of curves with matched asymptotics in various target spaces arising from the cut.

A closely related question asks whether we extend the possibilities for gluings in symplectic field theory by assigning invariants to contact manifolds with convex boundary and to suitable symplectic cobordisms between these. One proposal for defining the contact homology of contact manifolds with boundary  appears in \cite{colin2011sutures} using the language of sutures. 
In a similar vein, we can also ask to what extent SFT invariants glue together under decompositions of Liouville manifolds into Liouville sectors as in \cite{ganatra2020covariantly,ganatra2024sectorial}.

\subsection{Connections with string topology and smooth manifold invariants}

Another important class of examples comes from smooth topology.
Given a closed smooth manifold $Q$, its unit disk cotangent bundle $D^*Q$ (with respect to any Riemannian metric) is a Liouville domain whose contact boundary is the unit sphere cotangent bundle $S^*Q$, and the symplectic completion of $D^*Q$ is identified with the full cotangent bundle $T^*Q$.
Given two closed smooth manifolds $Q_1$ and $Q_2$ which are diffeomorphic, it is easy to check that $T^*Q_1$ and $T^*Q_2$ are symplectomorphic and $S^*Q_1$ and $S^*Q_2$ are contactomorphic.
The converse is a central question in symplectic topology known as the \hl{weak nearby Lagrangian conjecture}, which states that $Q_1$ and $Q_2$ should be diffeomorphic if their cotangent bundles $T^*Q_1$ and $T^*Q_2$ are symplectomorphic.
Meanwhile, Eliashberg's metaprinciple (c.f. \S\ref{subsec:quant}) posits that if $T^*Q_1$ and $T^*Q_2$ are not symplectomorphic then there must be some pseudoholomorphic curve invariant (or classical invariant) which distinguishes them.
Since symplectic field theory in some sense knows about all punctured curves in $T^*Q$ and $\R \times S^*Q$, this suggests that some suitably enhanced version of SFT should distinguish between $T^*Q_1$ and $T^*Q_2$.

In Floer theory, recall that Viterbo's isomorphism \cite{viterbo1999functors,salamon2006floer} identifies the symplectic cohomology of $T^*Q$ with the homology of the free loop space $\calL Q$ of $Q$ (possibly up to a twist of coefficients).
This isomorphism is known to respect many algebraic operations, e.g. the pair of pants product on symplectic cohomology matches up with the Chas--Sullivan product from string topology (see \cite{abbondandolo2006floer}), and this extends to the full BV algebra structures on both sides (see \cite{abouzaid2013symplectic}).
In light of the discussion in \S\ref{subsec:ext:Floer}, one should expect similarly close connections between SFT and string topology.
Indeed, an isomorphism between linearized contact homology $H(C_\chlin(T^*Q))$ and the equivariant free loop space homology $H_*(\calL Q /S^1,Q)$ as graded involutive graded Lie bialgebras is sketched in \cite{cieliebak2009role}.
The chain level enhancement of this isomorphism would identify the full SFT $C^\qonly_\sftlin(T^*Q)$ of $T^*Q$ as an $\op{IBL}_\infty$ algebra (recall \S\ref{subsec:linearize}) with the same structure defined in terms of the string topology $Q$. Models for the latter have been defined in \cite{cieliebak2020homological,hajek2020ibl,cieliebak2022chain,cieliebak2023chern}, with close connections to the Cherns--Simons theory of $Q$.

Although many string topology operations (i.e. those coming from the framed $E_2$ algebra structure) are known to depend only on the homotopy type of $Q$ (see \cite{cohen2002homotopy}), recent indications suggest that this need not extend to the Goresky--Hingston string topology coproduct (see e.g. \cite{goresky_hingston,sullivan2004open,hingston_wall}), which is sensitive to the simple homotopy type of $Q$ and in particular can distinguish the homotopy equivalent lens spaces $L(7,1)$ and $L(7,2)$ (see \cite{naef2021string}).
This would suggest that a suitable version of the contact homology algebra $C_\chalin(T^*Q)$ should be strong enough to know e.g. that $T^*L(7,1)$ and $T^*L(7,2)$ are not symplectomorphic (the latter was proved by Abouzaid--Kraugh \cite{abouzaid2018simple} using Fukaya category techniques).

Going beyond the simple homotopy type, it is a great puzzle to understand what type of pseudoholomorphic invariants of $T^*Q$ could recover the full diffeomorphism type of $Q$. For instance, Eliashberg has posed the following conundrum: if $Q_1$ and $Q_2$ are homeomorphic smooth four-manifolds which are smoothly distinguished by a subtle gauge theoretic invariant such as Seiberg--Witten theory, can we find an analogous symplectic invariant which distinguish $T^*Q_1$ from $T^*Q_2$?
Note that recent developments in Floer homotopy theory (see e.g. \cite{cohen2020floer,abouzaid2024foundation,large2021spectral,cote2023equivariant}) should provide a wealth of new spectrally enriched pseudoholomorphic curve invariants which may help shed some light on this mystery.
One naturally expects parallel spectrally enriched versions of symplectic field theory, which should retain more information about higher index moduli spaces of punctured curves (e.g. via the apparatus of flow categories).
Lastly, let us point out that one can also study the action filtered version of symplectic field theory for $T^*Q$ and $S^*Q$ as in \S\ref{subsec:quant}, which is related to the energy functional on $\calL Q$ and is sensitive not just to the smooth topology of $Q$ but also to its Riemannian geometry.

\bibliographystyle{math}
\bibliography{biblio}

\newcommand{\etalchar}[1]{$^{#1}$}
 \newcommand{\noop}[1]{}
\begin{thebibliography}{CGHH}

\bibitem[Abb]{abbas2014introduction}
Casim Abbas.
\newblock {\em An introduction to compactness results in symplectic field
  theory}.
\newblock Springer, 2014.

\bibitem[AS]{abbondandolo2006floer}
Alberto Abbondandolo and Matthias Schwarz.
\newblock {On the {F}loer homology of cotangent bundles}.
\newblock {\em Comm. Pure Appl. Math.} {\bf 59}(2006), 254--316.

\bibitem[Abo]{abouzaid2013symplectic}
Mohammed Abouzaid.
\newblock {Symplectic cohomology and Viterbo's theorem}.
\newblock {\em arXiv:1312.3354} (2013).

\bibitem[AB]{abouzaid2024foundation}
Mohammed Abouzaid and Andrew~J Blumberg.
\newblock {Foundation of {F}loer homotopy theory {I}: Flow categories}.
\newblock {\em arXiv:2404.03193} (2024).

\bibitem[AK]{abouzaid2018simple}
Mohammed Abouzaid and Thomas Kragh.
\newblock {Simple homotopy equivalence of nearby Lagrangians}.
\newblock {\em Acta Mathematica} {\bf 220}(2018), 207--237.

\bibitem[AMS]{abouzaid2021complex}
Mohammed Abouzaid, Mark McLean, and Ivan Smith.
\newblock {Complex cobordism, {H}amiltonian loops and global {K}uranishi
  charts}.
\newblock {\em arXiv:2110.14320} (2021).

\bibitem[Ati]{atiyah1988topological}
Michael~F Atiyah.
\newblock {Topological quantum field theory}.
\newblock {\em Publications Math{\'e}matiques de l'IH{\'E}S} {\bf 68}(1988),
  175--186.

\bibitem[Aud]{audin1988fibres}
Mich{\`e}le Audin.
\newblock {Fibr{\'e}s normaux d’immersions en dimension double, points
  doubles d’immersions lagrangiennes et plongements totalement r{\'e}els}.
\newblock {\em Commentarii Mathematici Helvetici} {\bf 63}(1988), 593--623.

\bibitem[Bao]{bao2023coherent}
Erkao Bao.
\newblock {Coherent orientations in symplectic field theory revisited}.
\newblock {\em Mathematische Zeitschrift} {\bf 305}(2023), 30.

\bibitem[BH]{bao2023semi}
Erkao Bao and Ko~Honda.
\newblock {Semi-global {K}uranishi charts and the definition of contact
  homology}.
\newblock {\em Advances in Mathematics} {\bf 414}(2023), 108864.

\bibitem[BG]{bedford1983envelopes}
Eric Bedford and Bernard Gaveau.
\newblock {Envelopes of holomorphy of certain 2-spheres in C2}.
\newblock {\em American Journal of Mathematics} (1983), 975--1009.

\bibitem[Ben]{bennequin_pfaff}
Daniel Bennequin.
\newblock {Entrelacements et \'equations de {P}faff}.
\newblock In {\em Third {S}chnepfenried geometry conference, {V}ol. 1
  ({S}chnepfenried, 1982)}, pages 87--161. Soc. Math. France, Paris, 1983.

\bibitem[Bis]{bishop_65}
Errett Bishop.
\newblock {Differentiable manifolds in complex {E}uclidean space}.
\newblock {\em Duke Math. J.} {\bf 32}(1965), 1--21.

\bibitem[Bou]{Bourgeois_MB}
Fr\'{e}d\'{e}ric Bourgeois.
\newblock {A {M}orse-{B}ott approach to contact homology}.
\newblock In {\em Symplectic and contact topology: interactions and
  perspectives ({T}oronto, {ON}/{M}ontreal, {QC}, 2001)}, pages 55--77. Amer.
  Math. Soc., Providence, RI, 2003.

\bibitem[BEE1]{bourgeois2011symplectic}
Fr{\'e}d{\'e}ric Bourgeois, Tobias Ekholm, and Yakov Eliashberg.
\newblock {Symplectic homology product via Legendrian surgery}.
\newblock {\em Proceedings of the National Academy of Sciences} {\bf
  108}(2011), 8114--8121.

\bibitem[BEE2]{bourgeois2012effect}
Fr{\'e}d{\'e}ric Bourgeois, Tobias Ekholm, and Yakov Eliashberg.
\newblock {Effect of Legendrian surgery}.
\newblock {\em Geometry \& Topology} {\bf 16}(2012), 301--389.

\bibitem[BEH{\etalchar{+}}]{BEHWZ}
Fr{\'e}d{\'e}ric Bourgeois, Yakov Eliashberg, Helmut Hofer, Krzysztof Wysocki,
  and Eduard Zehnder.
\newblock {Compactness results in symplectic field theory}.
\newblock {\em Geom. Topol.} {\bf 7}(2003), 799--888.

\bibitem[BM]{bourgeois2004coherent}
Fr{\'e}d{\'e}ric Bourgeois and Klaus Mohnke.
\newblock {Coherent orientations in symplectic field theory}.
\newblock {\em Mathematische Zeitschrift} {\bf 248}(2004), 123--146.

\bibitem[BN]{bourgeois2010towards}
Fr{\'e}d{\'e}ric Bourgeois and Klaus Niederkr{\"u}ger.
\newblock {Towards a good definition of algebraically overtwisted}.
\newblock {\em Expositiones Mathematicae} {\bf 28}(2010), 85--100.

\bibitem[BO1]{bourgeois2009exact}
Fr{\'e}d{\'e}ric Bourgeois and Alexandru Oancea.
\newblock {An exact sequence for contact and symplectic homology}.
\newblock {\em Inventiones Mathematicae} {\bf 175}(2009), 611--680.

\bibitem[BO2]{bourgeois2009symplectic}
Fr{\'e}d{\'e}ric Bourgeois and Alexandru Oancea.
\newblock {Symplectic homology, autonomous {H}amiltonians, and Morse-Bott
  moduli spaces}.
\newblock {\em Duke mathematical journal} {\bf 146}(2009), 71--174.

\bibitem[BO3]{bourgeois2015erratum}
Fr{\'e}d{\'e}ric Bourgeois and Alexandru Oancea.
\newblock {Erratum to: an exact sequence for contact-and symplectic homology}.
\newblock {\em Inventiones mathematicae} {\bf 200}(2015), 1065--1076.

\bibitem[BO4]{Bourgeois-Oancea_equivariant}
Fr\'{e}d\'{e}ric Bourgeois and Alexandru Oancea.
\newblock {{$S^1$}-equivariant symplectic homology and linearized contact
  homology}.
\newblock {\em Int. Math. Res. Not. IMRN} (2017), 3849--3937.

\bibitem[CH]{CaH}
Lucia Caporaso and Joe Harris.
\newblock {Counting plane curves of any genus}.
\newblock {\em Invent. Math.} {\bf 131}(1998), 345--392.

\bibitem[CDPT]{chaidez2022contact}
Julian Chaidez, Ipsita Datta, Rohil Prasad, and Shira Tanny.
\newblock {Contact homology and the strong closing lemma for ellipsoids}.
\newblock {\em arXiv:2206.04738} (2022).

\bibitem[CT]{chaidez2023elementary}
Julian Chaidez and Shira Tanny.
\newblock {Elementary {SFT} Spectral Gaps And The Strong Closing Property}.
\newblock {\em arXiv:2312.17211} (2023).

\bibitem[Che]{Chekanov_DGA}
Yuri Chekanov.
\newblock {Differential algebra of {L}egendrian links}.
\newblock {\em Invent. Math.} {\bf 150}(2002), 441--483.

\bibitem[Chi]{chiu_nonsqueezing}
Sheng-Fu Chiu.
\newblock {{Nonsqueezing property of contact balls}}.
\newblock {\em Duke Mathematical Journal} {\bf 166}(2017), 605 -- 655.

\bibitem[CM]{CM_SFT_compactness}
K.~Cieliebak and K.~Mohnke.
\newblock {{Compactness for punctured holomorphic curves}}.
\newblock {\em Journal of Symplectic Geometry} {\bf 3}(2005), 589 -- 654.

\bibitem[CE]{cieliebak2012stein}
Kai Cieliebak and Yakov Eliashberg.
\newblock {\em From {S}tein to {W}einstein and back: symplectic geometry of
  affine complex manifolds}.
\newblock American Mathematical Society, 2012.

\bibitem[CFL]{cieliebak2020homological}
Kai Cieliebak, Kenji Fukaya, and Janko Latschev.
\newblock {Homological algebra related to surfaces with boundary}.
\newblock {\em Quantum Topology} {\bf 11}(2020), 691--837.

\bibitem[CHV]{cieliebak2022chain}
Kai Cieliebak, Pavel Hajek, and Evgeny Volkov.
\newblock {Chain-level equivariant string topology: algebra versus analysis}.
\newblock {\em arXiv:2202.06837} (2022).

\bibitem[CL]{cieliebak2009role}
Kai Cieliebak and Janko Latschev.
\newblock {The role of string topology in symplectic field theory}.
\newblock {\em New perspectives and challenges in symplectic field theory} {\bf
  49}(2009), 113--146.

\bibitem[CM1]{CM1}
Kai Cieliebak and Klaus Mohnke.
\newblock {Symplectic hypersurfaces and transversality in {G}romov-{W}itten
  theory}.
\newblock {\em J. Symplectic Geom.} {\bf 5}(2007), 281--356.

\bibitem[CM2]{CM2}
Kai Cieliebak and Klaus Mohnke.
\newblock {Punctured holomorphic curves and {L}agrangian embeddings}.
\newblock {\em Invent. Math.} {\bf 212}(2018), 213--295.

\bibitem[CV1]{cieliebak2015first}
Kai Cieliebak and Evgeny Volkov.
\newblock {First steps in stable Hamiltonian topology}.
\newblock {\em Journal of the European Mathematical Society} {\bf 17}(2015),
  321--404.

\bibitem[CV2]{cieliebak2023chern}
Kai Cieliebak and Evgeny Volkov.
\newblock {Chern-Simons theory and string topology}.
\newblock {\em arXiv:2312.05922} (2023).

\bibitem[Coh]{cohen2020floer}
Ralph~L Cohen.
\newblock {{F}loer homotopy theory, revisited}.
\newblock pages 369--404. Chapman and Hall/CRC, 2020.

\bibitem[CJ]{cohen2002homotopy}
Ralph~L Cohen and John~DS Jones.
\newblock {A homotopy theoretic realization of string topology}.
\newblock {\em Mathematische Annalen} {\bf 324}(2002), 773--798.

\bibitem[CGHH]{colin2011sutures}
Vincent Colin, Paolo Ghiggini, Ko~Honda, and Michael Hutchings.
\newblock {Sutures and contact homology I}.
\newblock {\em Geometry \& Topology} {\bf 15}(2011), 1749--1842.

\bibitem[CFC]{cote2024homological}
Laurent C{\^o}t{\'e} and Fran{\c{c}}ois-Simon Fauteux-Chapleau.
\newblock {Homological invariants of codimension 2 contact submanifolds}.
\newblock {\em Geometry \& Topology} {\bf 28}(2024), 1--125.

\bibitem[CK]{cote2023equivariant}
Laurent C{\^o}t{\'e} and Yusuf~Bar{\i}{\c{s}} Kartal.
\newblock {Equivariant Floer homotopy via Morse-Bott theory}.
\newblock {\em arXiv:2309.15089} (2023).

\bibitem[Dub]{dubrovin2016symplectic}
Boris Dubrovin.
\newblock {Symplectic field theory of a disk, quantum integrable systems, and
  Schur polynomials}.
\newblock In {\em Annales Henri Poincar{\'e}}, pages 1595--1613. Springer,
  2016.

\bibitem[Edt]{edtmair2022elementary}
Oliver Edtmair.
\newblock {An elementary alternative to {PFH} spectral invariants}.
\newblock {\em arXiv:2207.12553} (2022).

\bibitem[Ekh]{ekholm2008rational}
Tobias Ekholm.
\newblock {Rational symplectic field theory over $\mathbb{Z}_2$ for exact
  Lagrangian cobordisms}.
\newblock {\em Journal of the European Mathematical Society} {\bf 10}(2008),
  641--704.

\bibitem[EES]{ekholm2005contact}
Tobias Ekholm, John Etnyre, and Michael Sullivan.
\newblock {The contact homology of Legendrian submanifolds in
  $\mathbb{R}^{2n+1}$}.
\newblock {\em Journal of Differential Geometry} {\bf 71}(2005), 177--305.

\bibitem[EN]{ekholm2015legendrian}
Tobias Ekholm and Lenhard Ng.
\newblock {Legendrian contact homology in the boundary of a subcritical
  Weinstein 4-manifold}.
\newblock {\em Journal of Differential Geometry} {\bf 101}(2015), 67--157.

\bibitem[EO]{Ekholm-Oancea_DGAS}
Tobias Ekholm and Alexandru Oancea.
\newblock {Symplectic and contact differential graded algebras}.
\newblock {\em Geom. Topol.} {\bf 21}(2017), 2161--2230.

\bibitem[Eli1]{eliashberg_contact}
Yakov Eliashberg.
\newblock {Invariants in contact topology}.
\newblock pages 327--338, 1998.

\bibitem[Eli2]{eliashberg_sft_and_applications}
Yakov Eliashberg.
\newblock {Symplectic field theory and its applications}.
\newblock pages 217--246. Eur. Math. Soc., Z\"{u}rich, 2007.

\bibitem[EGH]{EGH2000}
Yakov Eliashberg, Alexander Givental, and Helmut Hofer.
\newblock {Introduction to symplectic field theory}.
\newblock {\em Geom. Funct. Anal.} (2000), 560--673.
\newblock GAFA 2000 (Tel Aviv, 1999).

\bibitem[EKP]{eliashberg2006geometry}
Yakov Eliashberg, Sang~Seon Kim, and Leonid Polterovich.
\newblock {Geometry of contact transformations and domains: orderability versus
  squeezing}.
\newblock {\em Geometry \& Topology} {\bf 10}(2006), 1635--1747.

\bibitem[Fab1]{fabert2009contact}
Oliver Fabert.
\newblock {Contact homology of Hamiltonian mapping tori}.
\newblock {\em Commentarii Mathematici Helvetici} {\bf 85}(2009), 203--241.

\bibitem[Fab2]{Fabert_descendants_2011}
Oliver Fabert.
\newblock {Gravitational descendants in symplectic field theory}.
\newblock {\em Comm. Math. Phys.} {\bf 302}(2011), 113--159.

\bibitem[Fab3]{fabert2020higher}
Oliver Fabert.
\newblock {Higher algebraic structures in Hamiltonian Floer theory}.
\newblock {\em Advances in Geometry} {\bf 20}(2020), 179--215.

\bibitem[FR]{fabert2011string}
Oliver Fabert and Paolo Rossi.
\newblock {String, dilaton, and divisor equation in symplectic field theory}.
\newblock {\em International Mathematics Research Notices} {\bf 2011}(2011),
  4384--4404.

\bibitem[FH]{fish2023feral}
Joel Fish and Helmut Hofer.
\newblock {Feral curves and minimal sets}.
\newblock {\em Annals of Mathematics} {\bf 197}(2023), 533--738.

\bibitem[Fis]{fish_target_local}
Joel~W. Fish.
\newblock {Target-local {G}romov compactness}.
\newblock {\em Geom. Topol.} {\bf 15}(2011), 765--826.

\bibitem[FH]{fish2018lectures}
Joel~W Fish and Helmut Hofer.
\newblock {Lectures on Polyfolds and Symplectic Field Theory}.
\newblock {\em arXiv:1808.07147} (2018).

\bibitem[Fra]{fraser2016contact}
Maia Fraser.
\newblock {Contact non-squeezing at large scale in $\mathbb{R}^{2n} \times
  {S}^1$}.
\newblock {\em International Journal of Mathematics} {\bf 27}(2016), 1650107.

\bibitem[FSZ]{FSB2023contact}
Maia Fraser, Sheila Sandon, and Bingyu Zhang.
\newblock {Contact non-squeezing at large scale via generating functions}.
\newblock {\em arXiv:2310.11993} (2023).

\bibitem[Fuk1]{Fukaya-deformation}
Kenji Fukaya.
\newblock {Deformation theory, homological algebra and mirror symmetry}.
\newblock  in Ser. High Energy Phys. Cosmol. Gravit., pages 121--209. IOP,
  Bristol, 2003.

\bibitem[Fuk2]{fukaya2006application}
Kenji Fukaya.
\newblock {Application of {F}loer homology of {L}angrangian submanifolds to
  symplectic topology}.
\newblock pages 231--276. Springer, 2006.

\bibitem[FO]{fukaya1999arnold}
Kenji Fukaya and Kaoru Ono.
\newblock {Arnold conjecture and Gromov--Witten invariant}.
\newblock {\em Topology} {\bf 38}(1999), 933--1048.

\bibitem[GPS1]{ganatra2020covariantly}
Sheel Ganatra, John Pardon, and Vivek Shende.
\newblock {Covariantly functorial wrapped {F}loer theory on {L}iouville
  sectors}.
\newblock {\em Publications math{\'e}matiques de l'IH{\'E}S} {\bf 131}(2020),
  73--200.

\bibitem[GPS2]{ganatra2024sectorial}
Sheel Ganatra, John Pardon, and Vivek Shende.
\newblock {Sectorial descent for wrapped Fukaya categories}.
\newblock {\em Journal of the American Mathematical Society} {\bf 37}(2024),
  499--635.

\bibitem[GS]{ganatra2020embedding}
Sheel Ganatra and Kyler Siegel.
\newblock {On the embedding complexity of {L}iouville manifolds}.
\newblock {\em Journal of Differential Geometry} {\bf 127}(2024), 1019--1082.

\bibitem[GH]{goresky_hingston}
Mark Goresky and Nancy Hingston.
\newblock {Loop products and closed geodesics}.
\newblock {\em Duke Math. J.} {\bf 150}(2009), 117--209.

\bibitem[Gro]{gromov1985pseudo}
Mikhael Gromov.
\newblock {Pseudo holomorphic curves in symplectic manifolds}.
\newblock {\em Inventiones Mathematicae} {\bf 82}(1985), 307--347.

\bibitem[Gut]{gutt_pos_equiv}
Jean Gutt.
\newblock {The positive equivariant symplectic homology as an invariant for
  some contact manifolds}.
\newblock {\em J. Symplectic Geom.} {\bf 15}(2017), 1019--1069.

\bibitem[GH]{Gutt-Hu}
Jean Gutt and Michael Hutchings.
\newblock {Symplectic capacities from positive {$S^1$}-equivariant symplectic
  homology}.
\newblock {\em Algebr. Geom. Topol.} {\bf 18}(2018), 3537--3600.

\bibitem[H{\'a}j]{hajek2020ibl}
Pavel H{\'a}jek.
\newblock {IBL-infinity model of string topology from perturbative Chern-Simons
  theory}.
\newblock {\em arXiv:2003.07933} (2020).

\bibitem[Hin]{hind2015some}
Richard Hind.
\newblock {Some optimal embeddings of symplectic ellipsoids}.
\newblock {\em Journal of Topology} {\bf 8}(2015), 871--883.

\bibitem[HK]{HK}
Richard Hind and Ely Kerman.
\newblock {New obstructions to symplectic embeddings}.
\newblock {\em Invent. Math.} {\bf 196}(2014), 383--452.

\bibitem[HW]{hingston_wall}
Nancy Hingston and Nathalie Wahl.
\newblock {Product and coproduct in string topology}.
\newblock {\em Ann. Sci. \'Ec. Norm. Sup\'er. (4)} {\bf 56}(2023), 1381--1447.

\bibitem[HS]{hirschi2022global}
Amanda Hirschi and Mohan Swaminathan.
\newblock {Global {K}uranishi charts and a product formula in symplectic
  {G}romov-{W}itten theory}.
\newblock {\em arXiv:2212.11797} (2022).

\bibitem[Hof1]{Hofer_pseudoholomorphic_curves_in_symplectizations}
Helmut Hofer.
\newblock {Pseudoholomorphic curves in symplectizations with applications to
  the {W}einstein conjecture in dimension three}.
\newblock {\em Invent. Math.} {\bf 114}(1993), 515--563.

\bibitem[Hof2]{hofer2006general}
Helmut Hofer.
\newblock {A general Fredholm theory and applications}.
\newblock In {\em Current developments in mathematics, 2004}, pages 1--72.
  International Press of Boston, 2006.

\bibitem[HWZ]{polyfold_and_fredholm_theory}
Helmut Hofer, Krzysztof Wysocki, and Eduard Zehnder.
\newblock {Polyfold and Fredholm theory}.
\newblock {\em arXiv:1707.08941} (2017).

\bibitem[Hut1]{Hutchings_quantitative_ECH}
Michael Hutchings.
\newblock {Quantitative embedded contact homology}.
\newblock {\em J. Differential Geom.} {\bf 88}(2011), 231--266.

\bibitem[Hut2]{hutchings_2013}
Michael Hutchings.
\newblock {Rational SFT using only q variables}.
\newblock
  \url{https://floerhomology.wordpress.com/2013/04/23/rational-sft-using-only-q-variables/},
  2013.

\bibitem[Hut3]{hutchings_ECH_and_SFT}
Michael Hutchings.
\newblock {From SFT to ECH}.
\newblock
  \url{https://floerhomology.wordpress.com/2014/12/17/from-sft-to-ech-part-1/},
  2014.

\bibitem[Hut4]{Hlect}
Michael Hutchings.
\newblock {Lecture notes on embedded contact homology}.
\newblock pages 389--484. Springer, 2014.

\bibitem[Hut5]{hutchings2016beyond}
Michael Hutchings.
\newblock {Beyond {ECH} capacities}.
\newblock {\em Geometry \& Topology} {\bf 20}(2016), 1085--1126.

\bibitem[Hut6]{hutchings2022elementary}
Michael Hutchings.
\newblock {An elementary alternative to ECH capacities}.
\newblock {\em Proceedings of the National Academy of Sciences} {\bf
  119}(2022), e2203090119.

\bibitem[Hut7]{hutchings2022elementaryspectral}
Michael Hutchings.
\newblock {Elementary spectral invariants and quantitative closing lemmas for
  contact three-manifolds}.
\newblock {\em arXiv:2208.01767} (2022).

\bibitem[HN1]{HuN}
Michael Hutchings and Jo~Nelson.
\newblock {Cylindrical contact homology for dynamically convex contact forms in
  three dimensions}.
\newblock {\em J. Symplectic Geom.} {\bf 14}(2016), 983--1012.

\bibitem[HN2]{HN_hypertight}
Michael Hutchings and Jo~Nelson.
\newblock {$S^{1}$-equivariant contact homology for hypertight contact forms}.
\newblock {\em Journal of Topology} {\bf 15}(2022), 1455--1539.

\bibitem[HT1]{HuT}
Michael Hutchings and Clifford~Henry Taubes.
\newblock {Gluing pseudoholomorphic curves along branched covered cylinders.
  {I}}.
\newblock {\em J. Symplectic Geom.} {\bf 5}(2007), 43--137.

\bibitem[HT2]{HTII}
Michael Hutchings and Clifford~Henry Taubes.
\newblock {Gluing pseudoholomorphic curves along branched covered cylinders
  II}.
\newblock {\em Journal of Symplectic Geometry} {\bf 7}(2009), 29--133.

\bibitem[Ion]{Ionel_rel_GW_NC}
Eleny-Nicoleta Ionel.
\newblock {G{W} invariants relative to normal crossing divisors}.
\newblock {\em Adv. Math.} {\bf 281}(2015), 40--141.

\bibitem[IP1]{Ionel-Parker_relative_GW}
Eleny-Nicoleta Ionel and Thomas~H. Parker.
\newblock {Relative {G}romov-{W}itten invariants}.
\newblock {\em Ann. of Math. (2)} {\bf 157}(2003), 45--96.

\bibitem[IP2]{IP2004symplecticsum}
Eleny-Nicoleta Ionel and Thomas~H Parker.
\newblock {The symplectic sum formula for Gromov-Witten invariants}.
\newblock {\em Annals of mathematics} (2004), 935--1025.

\bibitem[Ish]{ishikawa2018construction}
Suguru Ishikawa.
\newblock {Construction of general symplectic field theory}.
\newblock {\em arXiv:1807.09455} (2018).

\bibitem[Klu]{kluck2010symplectic}
Timo Kluck.
\newblock {Symplectic field theory and integrable hierarchies}.
\newblock (2010).

\bibitem[Koc]{kock2001notes}
Joachim Kock.
\newblock {Notes on psi classes}.
\newblock (2001).

\bibitem[Kol]{kollar2013rational}
J{\'a}nos Koll{\'a}r.
\newblock {\em Rational curves on algebraic varieties}.
\newblock Springer Science \& Business Media, 2013.

\bibitem[Lar]{large2021spectral}
Tim Large.
\newblock {\em Spectral Fukaya Categories for Liouville Manifolds}.
\newblock PhD thesis, Massachusetts Institute of Technology, 2021.

\bibitem[Lat]{latschev2022remarks}
Janko Latschev.
\newblock {Remarks on the rational {SFT} formalism}.
\newblock {\em arXiv:2212.01814} (2022).

\bibitem[LWH]{latschev2011algebraic}
Janko Latschev, Chris Wendl, and Michael Hutchings.
\newblock {Algebraic torsion in contact manifolds}.
\newblock {\em Geometric and Functional Analysis} {\bf 21}(2011), 1144--1195.

\bibitem[Ler]{lerman1995symplectic}
Eugene Lerman.
\newblock {Symplectic cuts}.
\newblock {\em Mathematical Research Letters} {\bf 2}(1995), 247--258.

\bibitem[LR]{Li-Ruan_symplectic_surgery}
An-Min Li and Yongbin Ruan.
\newblock {Symplectic surgery and {G}romov-{W}itten invariants of
  {C}alabi-{Y}au 3-folds}.
\newblock {\em Invent. Math.} {\bf 145}(2001), 151--218.

\bibitem[McD]{McDuff_Hofer_conjecture}
Dusa McDuff.
\newblock {The {H}ofer conjecture on embedding symplectic ellipsoids}.
\newblock {\em J. Differential Geom.} {\bf 88}(2011), 519--532.

\bibitem[McSa]{JHOL}
Dusa McDuff and Dietmar Salamon.
\newblock {\em J-holomorphic curves and symplectic topology}.
\newblock American Mathematical Soc., 2012.

\bibitem[McS1]{McDuffSiegel_counting}
Dusa McDuff and Kyler Siegel.
\newblock {Counting curves with local tangency constraints}.
\newblock {\em Journal of Topology} {\bf 14}(2021), 1176--1242.

\bibitem[McS2]{cusps_and_ellipsoids}
Dusa McDuff and Kyler Siegel.
\newblock {Ellipsoidal superpotentials and singular curve counts}.
\newblock {\em arXiv:2308.07542} (2023).

\bibitem[MS]{mcduff2021symplectic}
Dusa McDuff and Kyler Siegel.
\newblock {Symplectic capacities, unperturbed curves and convex toric domains}.
\newblock {\em Geometry \& Topology} {\bf 28}(2024), 1213--1285.

\bibitem[McL1]{mclean2012growth}
Mark McLean.
\newblock {The growth rate of symplectic homology and affine varieties}.
\newblock {\em Geometric And Functional Analysis} {\bf 22}(2012), 369--442.

\bibitem[McL2]{mcleanslides}
Mark McLean.
\newblock {Computing symplectic homology of affine varieties (using spectral
  sequences)}.

\bibitem[MM]{miyaoka1986numerical}
Yoichi Miyaoka and Shigefumi Mori.
\newblock {A numerical criterion for uniruledness}.
\newblock {\em Annals of Mathematics} {\bf 124}(1986), 65--69.

\bibitem[MZ]{moreno2020landscape}
Agustin Moreno and Zhengyi Zhou.
\newblock {A landscape of contact manifolds via rational SFT}.
\newblock {\em arXiv:2012.04182} (2020).

\bibitem[Nae]{naef2021string}
Florian Naef.
\newblock {The string coproduct" knows" Reidemeister/Whitehead torsion}.
\newblock {\em arXiv:2106.11307} (2021).

\bibitem[Ng]{ng2010rational}
Lenhard Ng.
\newblock {Rational symplectic field theory for Legendrian knots}.
\newblock {\em Inventiones mathematicae} {\bf 182}(2010), 451--512.

\bibitem[Par1]{Pardon_algebraic_approach}
John Pardon.
\newblock {An algebraic approach to virtual fundamental cycles on moduli spaces
  of pseudo-holomorphic curves}.
\newblock {\em Geom. Topol.} {\bf 20}(2016), 779--1034.

\bibitem[Par2]{Pardcnct}
John Pardon.
\newblock {Contact homology and virtual fundamental cycles}.
\newblock {\em J. Amer. Math. Soc.} {\bf 32}(2019), 825--919.

\bibitem[Par3]{parker2015gluing}
Brett Parker.
\newblock {Gluing formula for Gromov-Witten invariants in a triple product}.
\newblock {\em arXiv:1511.00779} (2015).

\bibitem[Par4]{parker2016notes}
Brett Parker.
\newblock {Notes on exploded manifolds and a tropical gluing formula for
  Gromov-Witten invariants}.
\newblock {\em arXiv:1605.00577} (2016).

\bibitem[Rua]{ruan_virtual}
Yongbin Ruan.
\newblock {Virtual neighborhoods and pseudo-holomorphic curves}.
\newblock In {\em Proceedings of 6th {G}\"okova {G}eometry-{T}opology
  {C}onference}, pages 161--231, 1999.

\bibitem[SW]{salamon2006floer}
Dietmar~A Salamon and Joa Weber.
\newblock {Floer homology and the heat flow}.
\newblock {\em Geometric \& Functional Analysis GAFA} {\bf 16}(2006),
  1050--1138.

\bibitem[San]{sandon2011contact}
Sheila Sandon.
\newblock {Contact homology, capacity and non-squeezing in $\mathbb{R}^{2n}
  \times {S}^1$ via generating functions}.
\newblock In {\em Annales de l'Institut Fourier}, pages 145--185, 2011.

\bibitem[Sei]{Seidel_biased_view}
Paul Seidel.
\newblock {A biased view of symplectic cohomology}.
\newblock pages 211--253. Int. Press, Somerville, MA, 2008.

\bibitem[Sie]{HSC}
Kyler Siegel.
\newblock {Higher symplectic capacities}.
\newblock {\em Algebraic \& Geometric Topology} (2024).
\newblock To appear.

\bibitem[{\c{S}}ir]{csirikcci2024morse}
Nil~{\.I}pek {\c{S}}irik{\c{c}}i.
\newblock {The Morse Index for Manifolds with Constant Sectional Curvature}.
\newblock {\em Mediterranean Journal of Mathematics} {\bf 21}(2024), 139.

\bibitem[Sul]{sullivan2004open}
Dennis Sullivan.
\newblock {Open and closed string field theory interpreted in classical
  algebraic topology}.
\newblock {\em London Mathematical Society Lecture Note Series} {\bf
  308}(2004), 344.

\bibitem[TW]{taubes2000seiberg}
Clifford Taubes and Richard Wentworth.
\newblock {\em Seiberg Witten and Gromov invariants for symplectic
  4-manifolds}.
\newblock International Press Somerville, MA, 2000.

\bibitem[TMZ]{tehrani2018normal}
Mohammad~F Tehrani, Mark McLean, and Aleksey Zinger.
\newblock {Normal crossings singularities for symplectic topology}.
\newblock {\em Advances in Mathematics} {\bf 339}(2018), 672--748.

\bibitem[Ton]{tonk}
Dmitry Tonkonog.
\newblock {String topology with gravitational descendants, and periods of
  {L}andau-{G}inzburg potentials}.
\newblock {\em arXiv:1801.06921} (2018).

\bibitem[Ust]{ustilovsky1999infinitely}
Ilya Ustilovsky.
\newblock {Infinitely many contact structures on ${S}^{4m+1}$}.
\newblock {\em International Mathematics Research Notices} {\bf 1999}(1999),
  781--791.

\bibitem[vK]{van2004contact}
Otto van Koert.
\newblock {Contact homology of Brieskorn manifolds}.
\newblock {\em arXiv:math/0410208} (2004).

\bibitem[VW]{venugopalan2020tropical}
Sushmita Venugopalan and Chris Woodward.
\newblock {Tropical Fukaya algebras}.
\newblock {\em arXiv:2004.14314} (2020).

\bibitem[Vit1]{viterbo1994properties}
Claude Viterbo.
\newblock {Properties of embedded Lagrange manifolds}.
\newblock pages 463--474. Springer, 1994.

\bibitem[Vit2]{viterbo1999functors}
Claude Viterbo.
\newblock {Functors and computations in {F}loer homology with applications,
  {I}}.
\newblock {\em Geometric and Functional Analysis} {\bf 9}(1999), 985--1033.

\bibitem[Wen1]{Wendl_aut}
Chris Wendl.
\newblock {Automatic transversality and orbifolds of punctured holomorphic
  curves in dimension four}.
\newblock {\em Comment. Math. Helv.} {\bf 85}(2010), 347--407.

\bibitem[Wen2]{wendl_SFT_notes}
Chris Wendl.
\newblock {Lectures on Symplectic Field Theory}.
\newblock
  \url{https://www.mathematik.hu-berlin.de/~wendl/Sommer2020/SFT/lecturenotes.pdf},
  2020.

\bibitem[Yal]{Yalin_MC}
Sinan Yalin.
\newblock {Maurer-{C}artan spaces of filtered {$L_\infty$}-algebras}.
\newblock {\em J. Homotopy Relat. Struct.} {\bf 11}(2016), 375--407.

\bibitem[Yao]{yao2206cascades}
Yuan Yao.
\newblock {From Cascades to J-holomorphic Curves and Back. 2022. doi:
  10.48550}.
\newblock {\em arxiv:2206.04334}.

\bibitem[Yau1]{yau2004cylindrical}
Mei-Lin Yau.
\newblock {Cylindrical contact homology of subcritical Stein-fillable contact
  manifolds}.
\newblock {\em Geometry \& Topology} {\bf 8}(2004), 1243--1280.

\bibitem[Yau2]{MLYau_vanishing}
Mei-Lin Yau.
\newblock {Vanishing of the contact homology of overtwisted contact
  3-manifolds}.
\newblock {\em Bull. Inst. Math. Acad. Sin. (N.S.)} {\bf 1}(2006), 211--229.
\newblock With an appendix by Yakov Eliashberg.

\bibitem[Zha]{zhang2021capacities}
Bingyu Zhang.
\newblock {Capacities from the {C}hiu-{T}amarkin complex}.
\newblock {\em arXiv:2103.05143} (2021).

\end{thebibliography}

\end{document}